\newenvironment{enumeratei}{\begin{enumerate}[\upshape (i)]}%
                            {\end{enumerate}}
\theoremstyle{plain}
 \newtheorem{theorem}{Theorem}[section]
 \newtheorem{lemma}[theorem]{Lemma}
 \newtheorem{proposition}[theorem]{Proposition}
 \newtheorem{corollary}[theorem]{Corollary}
\theoremstyle{definition}
\newcommand \tbf [1]{\textbf{#1}}
\newcommand \tolprop {TImC}
\newcommand \weauthors [1] {#1}
\newcommand \weandrefauthors [1] {}
\def\private#1{}
\def\Malcev{Maltsev}
\renewcommand \epsilon{\varepsilon}
\renewcommand\phi{\varphi}
\newcommand \alg[1] {{\mathbf{#1}}} 
\newcommand \var[1] {{\mathcal{#1}}} 
\newcommand \tra {{\pmb\tau}} 
\newcommand \trb {{\pmb\varrho}} 
\newcommand \cra {{\pmb{\vartheta}}} 
\newcommand \set[1] {\{#1\}}
\newcommand \mcond [1] {M(#1)} 
\newcommand \clonalg [1] {\mathfrak A(#1)}
\newcommand \Free [2] {\alg F_{#1}(#2)}
\newcommand \orda {\kappa} 
\newcommand \ordb {\mu} 
\newcommand \con [2] {\textup{con}(#1,#2)}
\newcommand \lsep {\,\mathord{;}\,}
\newcommand \lcol {\mathord{:}\,}
\newcommand \kernel[1]  {\textup{Ker}(#1)}
\newcommand \kalap[1] {{#1}^{\ast}}
\newcommand \vesszo[1] {{#1}^{\bullet}}
\newcommand \gyemant[1] {{#1}^{\diamond}}
\newcommand \realiz [2] {{#1}_{#2}}
\newcommand \kr {\kern 0.24cm }
\newcommand \yes {{$\mathord +$}}
\newcommand \no {{$\mathord -$}}
\begin{document}

\title[Varieties whose tolerances are images of congruences]
{Varieties whose tolerances are homomorphic images of their congruences}
\author[G.\ Cz\'edli]{G\'abor Cz\'edli}
\email{czedli@math.u-szeged.hu}
\urladdr{http://www.math.u-szeged.hu/$\sim$czedli/}
\address{University of Szeged\\Bolyai Institute\\Szeged,
Aradi v\'ertan\'uk tere 1\\HUNGARY 6720}

\author[E.\,W.\ Kiss]{Emil W.~Kiss}
\email{ewkiss@cs.elte.hu}
\urladdr{http://www.cs.elte.hu/$\sim$ewkiss/}
\address{E\"otv\"os University, Department of Algebra and Number Theory, P\'azm\'any P\'eter s\'et\'any 1/c, Budapest, Hungary 1117}

\dedicatory{Dedicated to B\'ela Cs\'ak\'any on his eightieth birthday}

\thanks{This research was supported by the NFSR of Hungary (OTKA), grant numbers   K77432 and K83219.
 }

\subjclass[2000]{Primary 08A30. Secondary 08B05, 08A60, 06B10, and 20M99.}
\private{08A30: Subalgebras, congruence relations,
08B05: Equational logic, \Malcev{} conditions,
08A60 Unary algebras, 
06B10 Lattices/Ideals, congruence relations 
20M99 Semigroups/None of the above, but in this section }

\keywords{Tolerance relation, congruence image, variety of algebras, lattices, unary algebras, semigroups, \Malcev-like condition}

\date{\private{17:55} April 10, 2012}

\begin{abstract}
  The homomorphic image of a congruence is always a
  tolerance (relation) but, within a given variety, a
  tolerance is not necessarily obtained this way.  By a
  \Malcev-like condition, we characterize varieties whose
  tolerances are homomorphic images of their
  congruences~(\tolprop). As corollaries, we prove
  that the variety of semilattices, all varieties of
  lattices, and all varieties of unary algebras have
 \tolprop. We show that a congruence $n$-permutable
    variety has \tolprop\ if and only if it is congruence
    permutable, and construct an idempotent variety with a
    majority term that fails~\tolprop.
\end{abstract}

\maketitle

\section{Introduction}
Let $\alg A=(A;F)$ be a (general) algebra. By a \emph{tolerance} (\emph{relation}) of~$\alg A$ we mean a reflexive, symmetric, and compatible relation $\tra\subseteq A^2$. Transitive tolerances are \emph{congruences}. Tolerances, implicitly or explicitly, often played an important role in the theory of \Malcev{} (also spelled as Mal'cev)  conditions,  for example in  B.~J\'onsson~\cite{JonssonScand} and \weauthors{G.~Cz\'edli, E.\,K.~Horv\'ath and P.~Lipparini~}\cite{CzHL}. Tolerances are particularly useful in lattice theory; partly because the algebraic functions on a finite lattice are just the monotone functions preserving tolerances, see M.~Kindermann~\cite{kindermann}, and also because tolerances play a crucial role in decompositions of modular lattices into maximal complemented intervals, see Ch.~Herrmann~\cite{herrmann} and A.~Day and 
Ch.~Herrmann~\cite{dayherrmann}.

There are two important ways to deal with tolerances. Following I.~Chajda~\cite{chajdablocks}, I.~Chajda, J.~Niederle and B.~Zelinka~\cite{chajdaniederlezelinka}, and \weauthors{G.~Cz\'edli and L.~Klukovits~}\cite{czgklukovits}, one can describe them by their blocks. However, the present paper is devoted to a promising recent approach to tolerances: they can often be characterized as homomorphic images of congruences.

Clearly, see also E.~Fried and G.~Gr\"atzer~\cite{friedgratzer}, if $\phi\colon\alg B\to\alg A$ is a surjective homomorphism and $\tra$ is a tolerance of $\alg B$, then $\phi(\tra)=\set{(\phi(a),\phi(b)): (a,b)\in\tra}$ is a tolerance of~$\alg A$. In  particular, if  $\cra$ is a congruence of~$\alg B$, then $\phi(\cra)$ is a tolerance (but not necessarily a congruence) of $\alg A$. We are interested in varieties of algebras \emph{whose \tbf Tolerances are  homomorphic \tbf{Im}ages of their \tbf Congruences}, \tolprop{} in short. The property \tolprop{} holds in a variety~$\var V$ if for every $\alg A\in \var V$ and each tolerance $\tra$ of~$\alg A$, there exist an algebra $\alg B\in\var V$, a congruence $\cra$ of $\alg B$, and a homomorphism $\phi\colon \alg B\to\alg A$ such that $\tra=\phi(\cra)$. Notice that $\phi$ is necessarily surjective since $\tra$ is reflexive.

Using an old construction discovered  \weauthors{by the first author
}in~\cite{czglperrho}, \weauthors{G.~Cz\'edli and
  G.~Gr\"atzer~}\cite{czggg} proved that the variety of all
lattices satisfies \tolprop{}. Some other varieties
satisfying \tolprop{} have recently been found in
\weandrefauthors{I.~Chajda, G.~Cz\'edli and R.~Hala\v
  s~}\cite{chczgrh} and~\cite{chczgrhref}. In
particular, we know from \cite{chczgrhref} that all
  varieties defined by so-called balanced identities
satisfy \tolprop{}, and each algebra belongs to some variety
satisfying~\tolprop{}.

Our goal is to give a \Malcev-like characterization of the
property~\tolprop{}. This characterization enables us to
find several new results stating that certain varieties,
including all lattice varieties, all unary varieties, and
the variety of semilattices, satisfy~\tolprop{}. In
  the last section of the paper we initiate the
  investigation of the relationship between known \Malcev{}
  conditions and~\tolprop{}.

\section{Characterizing  \tolprop{}}
Let $n\in\mathbb N=\set{1,2,\ldots}$. We say that a variety~$\var V$ satisfies the \emph{\Malcev-like condition} $\mcond n$ if for any pair $(f,g)$ of $2n$-ary terms  such that the identity
\begin{equation*}f(x_0,x_0,x_1,x_1,\ldots,x_{n-1},x_{n-1}) \approx  g(x_0,x_0,x_1,x_1,\ldots,x_{n-1},x_{n-1})
\end{equation*}
holds in $\var V$, there exists a $4n$-ary term $h$ such that the identities 
\begin{align*}
\begin{aligned}
f(x_0,{}&y_0,x_1,y_1,\ldots,x_{n-1},y_{n-1})\cr 
& \approx h(x_0,y_0,x_1,y_1,\ldots,x_{n-1},y_{n-1},x_0,y_0,x_1,y_1,\ldots,x_{n-1},y_{n-1}),
\end{aligned}\\ 
\begin{aligned}
g(x_0,{}&y_0,x_1,y_1,\ldots,x_{n-1},y_{n-1})\cr 
& \approx h(y_0,x_0,y_1,x_1,\ldots,y_{n-1},x_{n-1},x_0,y_0,x_1,y_1,\ldots,x_{n-1},y_{n-1})\phantom,
\end{aligned}
\end{align*}
also hold in $\var V$. Our main goal is to prove the following two theorems.

\begin{theorem}\label{thmmain} For an arbitrary variety~$\var V$ of algebras, the following two conditions are equivalent.
\begin{enumeratei}
\item\label{thmmaina} $\var V$ satisfies \tolprop{}, that is, the tolerances of\/~ $\var V$ are homomorphic images of its congruences.
\item\label{thmmainb} For all $n\in\mathbb N$,  condition $\mcond n$ holds in~$\var V$.
\end{enumeratei}
\end{theorem}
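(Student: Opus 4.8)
The plan is to establish the two implications separately; both rest on an explicit description of one particular tolerance of a free algebra of~$\var V$. For $n\in\mathbb N$ let $\alg F=\Free{\var V}{2n}$ be the $\var V$-free algebra on free generators $x_0,y_0,\ldots,x_{n-1},y_{n-1}$, and let $\tra_n$ denote the tolerance of~$\alg F$ generated by $\set{(x_i,y_i):i<n}$. The first step is to prove that
\begin{multline*}
\tra_n=\bigl\{\bigl(h(x_0,y_0,\ldots,x_{n-1},y_{n-1},x_0,y_0,\ldots,x_{n-1},y_{n-1}),\\
h(y_0,x_0,\ldots,y_{n-1},x_{n-1},x_0,y_0,\ldots,x_{n-1},y_{n-1})\bigr):h\text{ a }4n\text{-ary term}\bigr\}.
\end{multline*}
Here ``$\supseteq$'' follows by an easy induction on the term~$h$ (the generators $(x_i,y_i)$ occur as the case $h=x_i$, and compatibility of~$\tra_n$ handles the induction step), while ``$\subseteq$'' amounts to checking that the right-hand side is a reflexive, symmetric and compatible relation containing every $(x_i,y_i)$; reflexivity, symmetry and compatibility correspond to three obvious substitutions into~$h$, each using that every element of~$\alg F$ is a $2n$-ary term in the generators. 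The relevance of this is that, via the standard translation between identities of~$\var V$ and equalities in~$\alg F$, the statement ``$(f(\ldots),g(\ldots))\in\tra_n$'' for $2n$-ary terms $f,g$ is verbatim the conclusion of~$\mcond n$ for the pair~$(f,g)$.

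\emph{(\ref{thmmaina}) $\Rightarrow$ (\ref{thmmainb}).} Fix~$n$ and a pair $(f,g)$ of $2n$-ary terms satisfying $f(x_0,x_0,\ldots,x_{n-1},x_{n-1})\approx g(x_0,x_0,\ldots,x_{n-1},x_{n-1})$ in~$\var V$. Apply~\tolprop{} to~$\alg F=\Free{\var V}{2n}$ and its tolerance~$\tra_n$: there are $\alg B\in\var V$, a congruence~$\cra$ of~$\alg B$ and a surjection $\phi\colon\alg B\to\alg F$ with $\phi(\cra)=\tra_n$. Since $(x_i,y_i)\in\tra_n=\phi(\cra)$, for each $i<n$ I would choose $\hat x_i,\hat y_i\in B$ with $\phi(\hat x_i)=x_i$, $\phi(\hat y_i)=y_i$ and $(\hat x_i,\hat y_i)\in\cra$. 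As $\cra$ is reflexive, symmetric and compatible, $f(\hat x_0,\hat y_0,\ldots,\hat x_{n-1},\hat y_{n-1})\mathrel{\cra}f(\hat x_0,\hat x_0,\ldots,\hat x_{n-1},\hat x_{n-1})$ and likewise for~$g$; and $f(\hat x_0,\hat x_0,\ldots)=g(\hat x_0,\hat x_0,\ldots)$ because $\alg B\in\var V$ satisfies the hypothesis identity. Since $\cra$ is transitive, $\bigl(f(\hat x_0,\hat y_0,\ldots),g(\hat x_0,\hat y_0,\ldots)\bigr)\in\cra$, and applying~$\phi$ gives $\bigl(f(x_0,y_0,\ldots),g(x_0,y_0,\ldots)\bigr)\in\tra_n$. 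By the displayed description this is exactly~$\mcond n$ for~$(f,g)$; as $n$ and $(f,g)$ were arbitrary, (\ref{thmmainb}) holds.

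\emph{(\ref{thmmainb}) $\Rightarrow$ (\ref{thmmaina}).} This is the harder, constructive direction. Given $\alg A\in\var V$ and a tolerance~$\tra$ of~$\alg A$, fix a set~$S$ of pairs generating~$\tra$ as a tolerance (one may take $S=\tra$). The plan is to build an algebra $\alg B\in\var V$ together with a surjection $\phi\colon\alg B\to\alg A$ and, for every $(a,b)\in S$, elements $\hat a,\hat b\in B$ with $\phi(\hat a)=a$ and $\phi(\hat b)=b$, and then to take $\cra$ to be the congruence of~$\alg B$ generated by $\set{(\hat a,\hat b):(a,b)\in S}$. For any such data the inclusion $\tra\subseteq\phi(\cra)$ is automatic, since $\phi(\cra)$ is a tolerance of~$\alg A$ containing~$S$. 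The whole substance is to arrange $\alg B$ so that the reverse inclusion $\phi(\cra)\subseteq\tra$ also holds. A pair of~$\cra$ is, by the description of a generated congruence, joined by a finite $\cra$-chain through the pairs $(\hat a,\hat b)$; applying~$\phi$ turns this into a $\tra$-chain in~$\alg A$, and one must collapse that chain to a single~$\tra$-step. This is precisely where the conditions~$\mcond m$ enter: when $\alg B$ is chosen ``free enough'', the matching equalities along the chain become identities of~$\var V$, $\mcond m$ (for~$m$ governed by the finitely many generators occurring in the chain) supplies the interpolating $4m$-ary terms~$h$, and these let one rewrite the chain so that its image is a single term applied to pairs from~$S$ together with parameters from~$\alg A$ — i.e.\ an element of~$\tra$ — exactly as in the computation above. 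The delicate point is to make $\alg B$ (constructed in the spirit of the algebra of~\cite{czglperrho} that was exploited for lattices in~\cite{czggg}) spread out just enough that this rewriting is legitimate, yet not so far that $\phi(\cra)$ grows beyond~$\tra$; pinning down~$\alg B$ and verifying $\phi(\cra)\subseteq\tra$ is the main obstacle, and the rest is routine bookkeeping.
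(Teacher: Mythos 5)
Your preliminary step and your proof of (\ref{thmmaina})$\Rightarrow$(\ref{thmmainb}) are correct and essentially identical to the paper's: your explicit description of the generated tolerance of a free algebra is the paper's Lemma~\ref{lemmatolgen}, and the rest of that direction matches the paper's argument. The problem is the direction (\ref{thmmainb})$\Rightarrow$(\ref{thmmaina}). What you offer there is a plan, not a proof: you never specify $\alg B$, you replace the analysis of the generated congruence by an unexecuted ``chain-collapsing'' scheme, and you yourself label the choice of $\alg B$ and the verification of $\phi(\cra)\subseteq\tra$ as the unresolved ``main obstacle''. That is precisely where the content of the theorem lies, so this is a genuine gap.

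The missing idea is that the maximally free choice of $\alg B$ works outright, and that membership in the resulting congruence can be read off without any chains. Enumerate the \emph{whole} tolerance as $\tra=\set{(a_i,b_i):i<\orda}$ (by reflexivity every element of $A$ occurs among the $a_i$, which gives surjectivity for free), put $\alg B=\Free{\var V}{\set{x_i,y_i\lcol i<\orda}}$, let $\phi$ send $x_i\mapsto a_i$, $y_i\mapsto b_i$, and let $\cra=\bigvee\set{\con{x_i}{y_i}:i<\orda}$. The key tool you are missing is the \Malcev--J\'onsson description of this join on free generators (the paper's Lemma~\ref{freecongRdescrp}): a pair $\bigl(p(x_i,y_i\lcol i<\orda),\,q(x_i,y_i\lcol i<\orda)\bigr)$ lies in $\cra$ if and only if $p(x_i,x_i\lcol i<\orda)\approx q(x_i,x_i\lcol i<\orda)$ is an identity of~$\var V$. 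Since $p$ and $q$ involve only finitely many indices, this is exactly the hypothesis of $\mcond n$ for suitable $2n$-ary $f,g$, and the term $h$ provided by $\mcond n$ gives, in $\alg A$, $f(a_i,b_i\lcol i<n)=h(a_i,b_i\lcol i<n\lsep a_i,b_i\lcol i<n)$ and $g(a_i,b_i\lcol i<n)=h(b_i,a_i\lcol i<n\lsep a_i,b_i\lcol i<n)$; compatibility, symmetry and reflexivity of $\tra$ then place the image pair in $\tra$, so $\phi(\cra)\subseteq\tra$, while $\tra\subseteq\phi(\cra)$ is trivial. (Your worry that a ``too free'' $\alg B$ might let $\phi(\cra)$ overshoot $\tra$ is moot: the computation just described shows the fully free choice gives equality.) The paper organizes this in two stages---first a free tolerance preimage $\trb$ of $\tra$, then a free congruence preimage of $\trb$, with $\mcond n$ used only in the second stage---but the mechanism is the one above; in either form, Lemma~\ref{freecongRdescrp} replaces the chain argument you could not carry out.
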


For a variety~$\var V$, let $\clonalg{\var V}$ be the \emph{clone algebra} of~$\var V$ introduced in W.~Taylor~\cite[Definition 2.9]{taylor}. It is a heterogeneous algebra consisting of the equivalence classes of all finitary  terms of $\var V$, where two terms, $t_1$ and~$t_2$, are equivalent if{f} the identity $t_1\approx t_2$ holds in~$\var V$. This heterogeneous algebra is equipped with (heterogeneous)  substitution operations and with constant operations assigning projections. In this terminology, $\mcond n$ becomes a first-order formula in the language of~$\clonalg{\var V}$, and $\mcond n$ holds in~$\var V$ if{f} this first-order formula, also denoted by $\mcond n$, holds in~$\clonalg{\var V}$. Therefore, Theorem~\ref{thmmain} characterizes \tolprop{} by the countably infinite set \hbox{$\set{\mcond n:n\in\mathbb N}$} of first-order formulas in the language of clone algebras of varieties. This raises the question whether we really need infinitely many formulas. The answer and some additional information are given in the following statement.

\begin{theorem}\label{propchargood}\ 
\begin{enumeratei}
\item\label{propchargooda} For $n\in\mathbb N$, $\,\mcond {n+1}$ implies $\mcond n$.
\item\label{propchargoodb} For $n\in\mathbb N$, $\,\mcond {n}$ does not imply $\mcond{n+1}$.
\item\label{propchargoodc} Assume that $\Sigma$ is a finite set of first-order formulas in the language of clone algebras of varieties. Then $\Sigma$, that is the conjunction of all members of~$\Sigma$, \emph{is not equivalent} to~\tolprop{}.
\end{enumeratei}
\end{theorem}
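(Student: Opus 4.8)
The plan is to prove the three parts of Theorem~\ref{propchargood} in order, since parts \eqref{propchargoodb} and \eqref{propchargoodc} will both rest on the implication established in part \eqref{propchargooda}.

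For part \eqref{propchargooda} I would argue directly at the level of the \Malcev-like conditions $\mcond n$. Suppose $\mcond{n+1}$ holds in $\var V$ and that we are given $2n$-ary terms $f,g$ with
$f(x_0,x_0,\dots,x_{n-1},x_{n-1})\approx g(x_0,x_0,\dots,x_{n-1},x_{n-1})$ in $\var V$. I would pad $f$ and $g$ out to $(2n+2)$-ary terms $\hat f,\hat g$ by adjoining two extra identified variable slots, say by setting $\hat f(x_0,y_0,\dots,x_{n-1},y_{n-1},x_n,y_n)=f(x_0,y_0,\dots,x_{n-1},y_{n-1})$ (ignoring the last pair) and similarly for $\hat g$. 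Then $\hat f$ and $\hat g$ agree when all pairs are diagonalised, so $\mcond{n+1}$ yields a $(4n+4)$-ary $\hat h$ realising the two required identities for $(\hat f,\hat g)$; specialising the variables corresponding to the dummy $n$-th coordinate (for instance setting $x_n=y_n$ to a fixed variable, or simply identifying them with $x_0$) and discarding the now-redundant argument positions produces the desired $4n$-ary term $h$ for $(f,g)$. The only thing to check is that the substitutions used to pad and then restrict are compatible with the permutation of the first block of arguments that appears in the second identity of $\mcond n$; keeping the dummy coordinate symmetric under the swap (i.e.\ $x_n$ and $y_n$ play symmetric roles) makes this automatic. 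Alternatively, and perhaps more cleanly, one invokes Theorem~\ref{thmmain}: $\mcond{n+1}$ for all relevant term pairs is what is used to push tolerances generated by $\le 2(n+1)$ elements through congruences, and a tolerance "spread out" over $2n$ generators is a fortiori spread out over $2n+2$; I would present whichever of these two routes is shorter.

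For part \eqref{propchargoodb} the plan is to exhibit, for each $n$, a variety $\var V_n$ satisfying $\mcond n$ but not $\mcond{n+1}$. This is the step I expect to be the main obstacle, because it requires a genuine construction rather than a formal manipulation. The natural source is the same circle of examples used in \cite{czggg} and \cite{chczgrhref}: unary algebras or algebras built from a carefully chosen graph/poset, where one can read off exactly which tolerances fail to be congruence images. Concretely I would look for a finite algebra $\alg A_n$ carrying a tolerance $\tra_n$ that is "minimally non-congruence" in the sense of needing $2(n+1)$ generators to witness the obstruction, so that by (the analysis behind) Theorem~\ref{thmmain} the variety generated by $\alg A_n$ violates $\mcond{n+1}$, while a separate direct argument — or a dimension/arity count on the terms of $\alg A_n$ — shows $\mcond n$ does hold. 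A promising candidate is a unary algebra on a cycle of length roughly $2n+2$, or an idempotent algebra on a long "fence"; the key computation is to verify that every identity of the form required by $\mcond n$ can be satisfied there while the analogous one for $n+1$ cannot, which amounts to a finite but delicate term-existence check in the clone of $\alg A_n$.

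Part \eqref{propchargoodc} then follows by a compactness-style argument. Suppose some finite $\Sigma$ in the language of clone algebras were equivalent to \tolprop. By Theorem~\ref{thmmain}, $\Sigma$ is then equivalent to $\bigwedge_{n\in\mathbb N}\mcond n$. On one hand, \tolprop\ (hence $\Sigma$) holds in every variety satisfying all $\mcond n$, so $\{\mcond n: n\in\mathbb N\}\models\Sigma$; since $\Sigma$ is finite and each $\mcond n$ is a single first-order sentence, and since by part \eqref{propchargooda} the $\mcond n$ form a descending chain, $\Sigma$ would already be implied by $\mcond N$ for some single $N$. On the other hand, $\Sigma$ implies every $\mcond n$, in particular $\mcond{N+1}$. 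Hence $\mcond N$ would imply $\mcond{N+1}$, contradicting part \eqref{propchargoodb}. (The one subtlety is the passage "$\Sigma$ implied by the infinite set $\{\mcond n\}$, $\Sigma$ finite $\Rightarrow$ $\Sigma$ implied by finitely many, hence by one of them": this uses that the consequence relation here is finitary — we are quantifying over all clone algebras of varieties, i.e.\ over models of a fixed first-order theory — so ordinary first-order compactness applies, after which the chain property of \eqref{propchargooda} collapses "finitely many" to "one".) I would spell this out carefully since it is the crux of \eqref{propchargoodc}, but there is no further construction required.
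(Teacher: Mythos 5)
Your parts \eqref{propchargooda} and \eqref{propchargoodc} are essentially the paper's own arguments and are fine: the padding-with-a-dummy-pair trick (keeping the dummy slot symmetric under the swap, or, as the paper does, filling it with a copy of the last genuine pair) proves \eqref{propchargooda}, and the compactness-plus-chain argument, using that clone algebras are models of a first-order (indeed multi-sorted equational) theory, proves \eqref{propchargoodc} exactly as in the paper, modulo \eqref{propchargoodb}.

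The genuine gap is part \eqref{propchargoodb}, which is the real content of the theorem and which your proposal only plans rather than proves. No separating variety is constructed, and the candidates you name are either doomed or unsupported: a unary algebra can never work, because by Corollary~\ref{CorUnry} every unary variety satisfies \tolprop{} and hence, by Theorem~\ref{thmmain}, all conditions $\mcond k$; similarly, lattice-like idempotent ``fence'' algebras with monotone operations fall under Corollary~\ref{corLatvR}. Moreover, your guiding heuristic --- a tolerance that is ``minimally non-congruence'' in the sense of needing $2(n+1)$ generators --- is not the mechanism that actually separates $\mcond n$ from $\mcond{n+1}$. The paper's construction is purely arity-theoretic: take $A=\set{0,1,\ldots,2n+2}$ and two $2(n+1)$-ary operations $f,g$ that return $1$ (respectively $2$) exactly when the set of arguments is all of $A\setminus\set{0}$, and $0$ otherwise. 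Then $\mcond{n+1}$ fails because any candidate $h$ is not a projection, so its outermost operation is $f$ or $g$, forcing its range to miss $2$ or $1$; and $\mcond n$ holds trivially because a $2n$-ary term has too few arguments to cover the $2n+2$ nonzero elements, so every non-projection $2n$-ary term is the constant $0$, making any two such terms equivalent. Without an explicit construction of this kind (and a verification that $\mcond n$ still holds in it), part \eqref{propchargoodb} --- and with it part \eqref{propchargoodc} --- remains unproved.
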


\section{Proving our theorems}
For $n\in\mathbb N$, the list $z_0,\ldots,z_{n-1}$ of the elements (or lists) $z_i$ will often be denoted by $z_i\lcol i<n$, and similar notation applies when $n$ is an ordinal. Lists are concatenated by semicolons. This convention allows us to write terms in a concise form. For example, $\mcond n$ in this notation is the following condition: if
\begin{equation}\label{Mn0}
f(x_i,x_i\lcol  i<n)\approx g(x_i,x_i \lcol  i<n),
\end{equation}
then there exists a $4n$-ary term $h$ such that
\begin{align}\label{Mn1}
f(x_i,y_i\lcol  i<n) &\approx h(x_i,y_i\lcol  i<n\lsep  x_i,y_i\lcol  i<n), \\ 
g(x_i,y_i\lcol  i<n) &\approx h(y_i,x_i\lcol  i<n\lsep  x_i,y_i\lcol  i<n)\text.\label{Mn2}
\end{align}

The algebra freely generated by $X$ in a variety $\var V$
will be denoted by $\Free {\var V}X$.  We consider only
well-ordered free generating sets.  Therefore, if $\orda$
and $\ordb$ denote cardinal numbers and  we write, say,
$X=\set{x_i,y_i\lcol i<\orda\lsep  z_j\lcol j<\ordb}$,
which is the same as $X=\set{x_i:i<\orda}\cup \set{y_i:i<\orda}\cup \set{z_j:j<\ordb}$, then we always assume that 
$\set{x_i:i<\orda}$, 
$\set{y_i:i<\orda}$, and 
$\set{z_j:j<\ordb}$ are pairwise disjoint and each of the equations $x_i=x_j$, $y_i=y_j$, and $z_i=z_j$ 
implies that $i=j$. However, if $a_i$ and~$a_j$ are not necessarily free generators of a free algebra, then $a_i=a_j$ does not imply that $i=j$. Sometimes we allow ``formally infinitary'' terms like $f(x_i,y_i\lcol i<\orda\lsep  z_j\lcol j<\ordb)$; they, of course, depend only on finitely many of their variables. 
The smallest congruence collapsing $a$ and $b$ is denoted by $\con ab$. 
The idea of the following statement goes back to \Malcev~\cite{malcev} and J\'onsson~\cite{JonssonScand}; for the reader's convenience and also to demonstrate how our notation works, we give a short proof.

\begin{lemma}\label{freecongRdescrp} Let\/ $X=\set{x_i,y_i\lcol i<\orda\lsep  z_j\lcol j<\ordb}$ be a nonempty set, let\/ $\var V$ be a variety, and denote by $\cra$ the congruence $\bigvee\set{\con{x_i}{y_i}:i<\orda}$ of $\Free{\var V}X$. Let $f$ and~$g$ be terms over~$X$. 
Then $\cra$ collapses the elements $f(x_i,y_i\lcol
i<\orda\lsep  z_j\lcol j<\ordb)$ and $g(x_i,y_i\lcol
i<\orda\lsep  z_j\lcol j<\ordb)$ of $\Free{\var V}X$ if and only if\/~$\var V$ satisfies the identity 
$f(x_i,x_i\lcol i<\orda\lsep  z_j \lcol j<\ordb)\approx g(x_i,x_i \lcol i<\orda\lsep  z_j\lcol j<\ordb)$.
\end{lemma}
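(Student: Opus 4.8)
The plan is to reduce the statement to two short computations, one using the canonical surjection $\nu\colon\Free{\var V}X\to\Free{\var V}X/\cra$ and one using the substitution endomorphism of $\Free{\var V}X$ that glues $x_i$ to $y_i$. Throughout, write $\bar u$ for the $\cra$-class $\nu(u)$ of an element $u\in\Free{\var V}X$.

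For the ``if'' direction, suppose $\var V$ satisfies the identity $f(x_i,x_i\lcol i<\orda\lsep z_j\lcol j<\ordb)\approx g(x_i,x_i\lcol i<\orda\lsep z_j\lcol j<\ordb)$. I would work inside $\alg Q:=\Free{\var V}X/\cra$, which lies in~$\var V$ and in which $\bar x_i=\bar y_i$ for every $i<\orda$, since $\con{x_i}{y_i}\le\cra$. Evaluating the assumed identity in $\alg Q$ under the assignment $x_i\mapsto\bar x_i$, $z_j\mapsto\bar z_j$, and then rewriting, for each $i<\orda$, the second of the two occurrences of $\bar x_i$ as the equal element $\bar y_i$, both sides turn into $\overline{f(x_i,y_i\lcol i<\orda\lsep z_j\lcol j<\ordb)}$ and $\overline{g(x_i,y_i\lcol i<\orda\lsep z_j\lcol j<\ordb)}$ respectively; since the identity holds in $\alg Q\in\var V$, these two $\cra$-classes coincide, which is exactly what it means for $\cra$ to collapse the two elements in question.

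For the ``only if'' direction, I would introduce the endomorphism $\sigma$ of $\Free{\var V}X$ determined on free generators by $x_i\mapsto x_i$ and $y_i\mapsto x_i$ for $i<\orda$, and $z_j\mapsto z_j$ for $j<\ordb$; it exists because $\Free{\var V}X$ is free. Since $\sigma(x_i)=\sigma(y_i)$, the congruence $\kernel\sigma$ contains each $\con{x_i}{y_i}$, hence their join $\cra$, so $\cra$-collapsed pairs are equalized by $\sigma$. Applying this to the pair in the hypothesis and using that $\sigma$ commutes with the term operations gives, in $\Free{\var V}X$,
\begin{align*}
 f(x_i,x_i\lcol i<\orda\lsep z_j\lcol j<\ordb)
 &=\sigma\bigl(f(x_i,y_i\lcol i<\orda\lsep z_j\lcol j<\ordb)\bigr)\\
 &=\sigma\bigl(g(x_i,y_i\lcol i<\orda\lsep z_j\lcol j<\ordb)\bigr)
  =g(x_i,x_i\lcol i<\orda\lsep z_j\lcol j<\ordb).
\end{align*}
Since $\Free{\var V}X$ is relatively free, the equality of these two elements --- both of which are values of terms over~$X$ --- is equivalent to the validity of the corresponding identity throughout~$\var V$, which finishes this direction.

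The whole argument is essentially bookkeeping. The only points that need attention are making sure the ``formally infinitary'' notation (which in fact depends on finitely many variables only) is substituted consistently on both sides of each identity, and citing the two standard facts about relatively free algebras: that an assignment of the free generators extends to a homomorphism, and that an equation between term values holds in $\Free{\var V}X$ exactly when the corresponding identity holds in~$\var V$. I do not expect a genuine obstacle here; this lemma is the notational warm-up for the paper rather than a hard step.
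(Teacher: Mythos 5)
Your proposal is correct and follows essentially the same route as the paper: the ``if'' direction is the paper's transitivity/compatibility computation repackaged through the quotient $\Free{\var V}X/\cra$, and the ``only if'' direction uses the same substitution homomorphism gluing $x_i$ to $y_i$ (your endomorphism $\sigma$ of $\Free{\var V}X$ is just the paper's map onto $\Free{\var V}Y$ viewed inside $\Free{\var V}X$), with $\cra\subseteq\kernel{\sigma}$ and the standard fact about identities in relatively free algebras finishing the argument.
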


\begin{proof} If the identity holds, then 
\begin{align*}
f(x_i,{}&y_i\lcol i<\orda\lsep  z_j \lcol j<\ordb)  \mathrel{\cra}    f(x_i,x_i\lcol i<\orda\lsep  z_j \lcol j<\ordb) \cr
 & =  g(x_i,x_i\lcol i<\orda\lsep  z_j \lcol j<\ordb) \mathrel{\cra} g(x_i,y_i\lcol i<\orda\lsep  z_j \lcol j<\ordb) \text,
\end{align*}
and the transitivity of~$\cra$ applies. Conversely, if $\cra$ collapses the two elements of $\Free{\var V}X$ in question, then let $Y=\set{x_i\lcol i<\orda\lsep  z_j\lcol j<\ordb}$, and consider the unique homomorphism $\phi\colon \Free{\var V}X\to \Free{\var V}Y$ such that $\phi(x_i)=\phi(y_i)=x_i$ for $i<\orda$ and $\phi(z_j)=z_j$ for $j<\ordb$. Then $\cra\subseteq \kernel{\phi}$ since $\phi$ collapses the pairs that generate~$\cra$. Hence
\begin{align*}
f(x_i,{}&x_i\lcol i<\orda\lsep  z_j \lcol j<\ordb) =\phi\bigl( f(x_i,y_i\lcol i<\orda\lsep  z_j \lcol j<\ordb) \bigr) \cr
&= \phi\bigl( g(x_i,y_i\lcol i<\orda\lsep  z_j \lcol j<\ordb) \bigr) =
g(x_i,x_i\lcol i<\orda\lsep  z_j \lcol j<\ordb),
\end{align*}
which implies that the required identity holds in~$\var V$.
\end{proof}

The following auxiliary statement follows from I.~Chajda~\cite[Lemma 1.7]{chajdabook}; it also follows easily from the observation that the tolerances of~$\alg A$ are just the symmetric subalgebras of~$\alg A^2$ containing the diagonal $\set{(a,a):a\in A}$.

\begin{lemma}\label{lemmatolgen} Assume that\/ $\tra$ is the smallest tolerance of $\alg A=(A;F)$ that contains the pairs $(a_i,b_i)$ for $i<\orda$. Let $(d,e)\in A^2$, and assume that $C=\set{c_j: j<\ordb}\subseteq A$ generates  $\alg A$. Then $(d,e)\in \tra$ if{f} there is  a term~$h$ such that 
\begin{align*}
d=h(a_i,b_i\lcol i<\orda \lsep  c_j\lcol j<\ordb)\,\text{  and }\, 
e=h(b_i,a_i\lcol i<\orda \lsep  c_j\lcol j<\ordb)\text. \qed
\end{align*}
\end{lemma}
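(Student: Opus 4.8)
The statement to prove is Lemma~\ref{lemmatolgen}: that membership $(d,e)\in\tra$ in the tolerance generated by the pairs $(a_i,b_i)$ is witnessed exactly by a single term $h$ applied, in the two variable orders, to the generating data. My plan is to prove this directly from the characterization of tolerances as the symmetric subalgebras of $\alg A^2$ containing the diagonal, exactly as the hint suggests. The ``if'' direction is immediate: if such an $h$ exists, then since $\tra$ is reflexive it contains the diagonal, hence it contains all pairs $(c_j,c_j)$; it contains each $(a_i,b_i)$ by hypothesis; and $\tra$, being a tolerance, is a subalgebra of $\alg A^2$, so it is closed under the term operation $h^{\alg A^2}$. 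Feeding the pairs $(a_i,b_i)\lcol i<\orda\lsep (c_j,c_j)\lcol j<\ordb$ into $h^{\alg A^2}$ therefore produces a pair lying in $\tra$; but by symmetry of $\tra$ the pair $(a_i,b_i)$ may be replaced by $(b_i,a_i)$ in any coordinate, and in fact the componentwise computation of $h^{\alg A^2}$ shows this pair is exactly $\bigl(h(a_i,b_i\lcol i<\orda\lsep c_j\lcol j<\ordb),\,h(b_i,a_i\lcol i<\orda\lsep c_j\lcol j<\ordb)\bigr)=(d,e)$. Hence $(d,e)\in\tra$.

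\textbf{The converse.}
For the ``only if'' direction I would let $\trb\subseteq A^2$ be the set of \emph{all} pairs $(d,e)$ for which some term $h$ with the stated property exists, and show $\trb$ is a tolerance containing every $(a_i,b_i)$; minimality of $\tra$ then forces $\tra\subseteq\trb$, which is what we want. That $\trb$ contains each $(a_i,b_i)$ is seen by taking $h$ to be the appropriate projection onto the $i$-th ``$a$-slot.'' Reflexivity: for $d\in A$, write $d$ as a term in the generators $c_j$ (possible since $C$ generates $\alg A$) and use that term, ignoring the $a_i,b_i$ arguments, to get $(d,d)\in\trb$. Symmetry: if $h$ witnesses $(d,e)\in\trb$, then the term $h'$ obtained from $h$ by swapping, for each $i<\orda$, the roles of its two $i$-th arguments witnesses $(e,d)\in\trb$. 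Compatibility is the one place that takes a line of bookkeeping: given an $m$-ary basic operation $F$ and pairs $(d^{(1)},e^{(1)}),\dots,(d^{(m)},e^{(m)})\in\trb$ with witnessing terms $h^{(1)},\dots,h^{(m)}$, the term $h:=F(h^{(1)},\dots,h^{(m)})$ (all sharing the same variable list) witnesses that $\bigl(F(d^{(1)},\dots,d^{(m)}),\,F(e^{(1)},\dots,e^{(m)})\bigr)\in\trb$, because substituting $a_i,b_i$ for the first group and $c_j$ for the second into this combined term evaluates coordinatewise to $F$ applied to the $d^{(k)}$, and likewise with $a_i,b_i$ swapped it gives $F$ applied to the $e^{(k)}$.

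\textbf{Where the difficulty lies.}
There is no deep obstacle here; the lemma is a routine ``generated subalgebra'' argument once one views tolerances as diagonal-containing symmetric subalgebras of $\alg A^2$. The only genuinely fiddly point is keeping the variable lists aligned: the same term $h$ must accept the tuple $a_i,b_i\lcol i<\orda\lsep c_j\lcol j<\ordb$ in \emph{one fixed arity}, and when combining several such terms under a basic operation one must first pad them to a common list of variables (harmless, since terms depend on finitely many variables and one may add dummy arguments). A secondary subtlety worth a remark is that $\orda$ may be infinite, so one should be explicit that $h$, being a term, involves only finitely many of the $a_i,b_i,c_j$, which is exactly why the lemma is stated with the ``formally infinitary'' convention introduced just above it. I would present the converse as the construction of $\trb$ and a four-bullet verification (contains the generators; reflexive; symmetric; compatible), then invoke minimality of $\tra$, and close. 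Since the paper already offers the alternative derivation from Chajda's \cite[Lemma~1.7]{chajdabook}, I would keep the proof short and cite that as well.
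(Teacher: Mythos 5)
Your argument is correct and follows precisely the route the paper itself indicates: the paper leaves Lemma~\ref{lemmatolgen} unproved, merely citing Chajda~\cite[Lemma 1.7]{chajdabook} and the observation that tolerances of $\alg A$ are the symmetric, diagonal-containing subalgebras of $\alg A^2$, and your write-up just supplies the routine details of that observation (the subalgebra computation for the ``if'' direction, and the verification that the set of term-witnessed pairs is a tolerance containing the $(a_i,b_i)$, so that minimality of $\tra$ finishes the ``only if'' direction).
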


\begin{proof}[Proof of Theorem~\ref{thmmain}]
In order to prove that part \eqref{thmmaina} implies part
\eqref{thmmainb}, assume that $n\in \mathbb N$, that   $\var
V$ satisfies
 \tolprop{}, and that $f$ and $g$ are $2n$-ary terms such that the identity $f(x_i,x_i\lcol i<n) \approx g(x_i,x_i\lcol i<n)$ holds in $\var V$.  
Let $X=\set{x_i,y_i\lcol i<n}$ and $\alg F=\Free{\var V}X$. Define $\tra$ as the tolerance generated by $\set{(x_i,y_i): i<n}$. By assumption, there exists a $\alg B\in\var V$, a congruence $\cra$ of $\alg B$, and a surjective homomorphism $\alg B\to\alg F$ such that $\tra=\phi(\cra)$. We can pick elements $a_i,b_i\in B$ such that, for $i<n$, $(a_i,b_i)\in \cra$,\; $\phi(a_i)=x_i$, and $\phi(b_i)=y_i$. Since
\[f(a_i,b_i\lcol i<n) \mathrel{\cra} f(a_i,a_i\lcol i<n) = g(a_i,a_i\lcol i<n)
 \mathrel{\cra}g(a_i,b_i\lcol i<n),
\] 
by applying $\phi$ we conclude that 
$\bigl( f(x_i,y_i\lcol i<n) ,g(x_i,y_i\lcol i<n)   \bigr)\in\tra$.
Therefore, applying Lemma~\ref{lemmatolgen} with $C=X$ and using that an equation of two terms on the free generators is an identity that holds in~$\var V$, we obtain a $2n$-ary term~$h$ such that Identities~\eqref{Mn1} and~\eqref{Mn2} hold in~$\var V$. Hence part~\eqref{thmmaina} implies part~\eqref{thmmainb}.

To prove the converse implication, assume that  $\mcond n$ holds in $\var V$ for all $n\in\mathbb N$, $\alg A=(A;F)\in \var V$, and $\tra=\set{(a_i,b_i): i<\orda}$ is a tolerance of~$\alg A$. Here $\orda$ is an ordinal.  By reflexivity, $A=\set{a_i:i<\orda}$, but usually this is a redundant enumeration of~$A$. We are going to find a congruence preimage~$\cra$ of~$\tra$ in two steps: first we construct 
a ``free'' tolerance preimage~$\trb$ of~$\tra$, and then a congruence preimage~$\cra$ of~$\trb$. 

Let $Y=\set{u_i,v_i\lcol i<\orda}$, 
$\alg G=\Free{\var V}Y$, and let $\trb$ be the tolerance generated by $\set{(u_i,v_i):i<\orda}$. Consider the surjective homomorphism $\phi\colon\alg G\to \alg A$ such that $\phi(u_i)=a_i$ and $\phi(v_i)=b_i$ for $i<\orda$. Since $\phi(\trb)$ is a tolerance of~$\alg A$ and contains the pairs $(a_i,b_i)=\bigl(\phi(u_i),\phi(v_i)\bigr)$, we have that $\tra\subseteq \phi(\trb)$. 
To show the converse inclusion, take a pair in~$\phi(\trb)$. It is of the form 
$
\bigl(t_1(a_i,b_i\lcol i<\orda), t_2(a_i,b_i\lcol i<\orda)  \bigr)
$ where $t_1$ and $t_2$ are terms and $\bigl(t_1(u_i,v_i\lcol i<\orda), t_2(u_i,v_i\lcol i<\orda)  \bigr)\in\trb$.
Applying  Lemma~\ref{lemmatolgen} with $C=Y$, we obtain a term $t_3$ such that 
\begin{align*}
t_1(u_i,v_i\lcol i<\orda) &= t_3(u_i,v_i\lcol i<\orda \lsep  u_i,v_i\lcol i<\orda  ),\cr
t_2(u_i,v_i\lcol i<\orda) &= t_3(v_i,u_i\lcol i<\orda \lsep  u_i,v_i\lcol i<\orda  )\text.
\end{align*} 
Since $\phi$ transfers these two equations to
\begin{align*}
t_1(a_i,b_i\lcol i<\orda) &= t_3(a_i,b_i\lcol i<\orda \lsep  a_i,b_i\lcol i<\orda  ),\cr
t_2(a_i,b_i\lcol i<\orda) &= t_3(b_i,a_i\lcol i<\orda \lsep  a_i,b_i\lcol i<\orda  ),
\end{align*} 
it follows (directly or from Lemma~\ref{lemmatolgen}) that 
$\bigl( t_1(a_i,b_i\lcol i<\orda), t_2(a_i,b_i\lcol
i<\orda)\bigr)\in \tra$. Therefore,
$\tra=\phi(\trb)$. In this first step we did not use   condition~$\mcond n$.

Next, let $\set{(c_j,d_j):j<\ordb}=\trb$, $X=\set{x_j,y_j\lcol j<\ordb}$, and $\alg F=\Free{\var V}X$. Consider the congruence $\cra=\bigvee\set{\con{x_j}{y_j}:j<\ordb}$ of~$\alg F$, and let $\psi\colon \alg F\to \alg G$ be the surjective homomorphism defined by $\psi(x_j)=c_j$ and $\psi(y_j)=d_j$ for $j<\ordb$. Clearly, $\trb\subseteq\psi(\cra)$. To prove the converse inclusion, 
take a pair in~$\cra$. It is of the form
 $\bigl( p(x_j,y_j\lcol  j<\ordb),  q(x_j,y_j\lcol  j<\ordb)\bigr)\in\cra$, where $p$ and~$q$ are terms. We have to prove that the pair 
\[\bigl( p(c_j,d_j\lcol  j<\ordb),  q(c_j,d_j\lcol  j<\ordb)\bigr) =\bigl( \psi(p(x_j,y_j\lcol  j<\ordb)), \psi( q(x_j,y_j\lcol  j<\ordb))\bigr)\] 
belongs to $\trb$. 
We obtain from Lemma~\ref{freecongRdescrp} that
\begin{equation}\label{fKlPsw}
\text{the identity $
p(x_j,x_j\lcol  j<\ordb)\approx  q(x_j,x_j\lcol  j<\ordb)$ holds in~$\var V$.}
\end{equation}
Since the terms $p$ and~$q$ depend on finitely many variables and the original ordering of variables is irrelevant, we can assume 
that  $p(x_j,y_j\lcol  j<\ordb)\approx f(x_i,y_i\lcol i<n)$ and $q(x_j,y_j\lcol  j<\ordb)\approx g(x_i,y_i\lcol i<n)$ hold in $\var V$ for some $n\in\mathbb N$ and some $2n$-ary terms $f$ and~$g$. Hence \eqref{fKlPsw} turns into
\begin{equation*}
\text{$f(x_i,x_i\lcol i<n)\approx g(x_i,x_i\lcol i<n)$ holds in~$\var V$, }
\end{equation*}
and all we have to prove is that
\begin{equation}\label{HRmZkyzs}
\bigl(f(c_i,d_i\lcol i<n), g(c_i,d_i\lcol i<n)\bigr) \in \trb\text.
\end{equation}
Let $h$ be a $4n$-ary term provided by~$\mcond n$. Since 
\begin{align*}
f(c_i,d_i\lcol i<n) &= h(c_i,d_i\lcol i<n \lsep c_i,d_i\lcol i<n), \cr 
g(c_i,d_i\lcol i<n) &= h(d_i,c_i\lcol i<n \lsep c_i,d_i\lcol i<n),
\end{align*}
and $h$ preserves~$\trb$, \eqref{HRmZkyzs} follows. This shows that $\trb=\psi(\cra)$.

Finally, the composite map $\phi\circ\psi\colon \alg F\to \alg A$, $z\mapsto \phi\bigl(\psi(z)\bigr)$ is a surjective homomorphism, and 
$(\phi\circ\psi)(\cra)=\phi\bigl(\psi(\cra)\bigr)= \phi(\trb)=\tra$. That is, 
$\var V$ satisfies~\tolprop{}.
\end{proof}

\begin{proof}[Proof of Theorem~\ref{propchargood}] To prove part \eqref{propchargooda}, assume that $\mcond {n+1}$ holds in a variety~$\var V$, and $f$ and~$g$ are $2n$-ary terms such that the identity $f(x_i,x_i\lcol i<n)\approx 
g(x_i,x_i\lcol i<n)$ holds in~$\var V$. By adding two fictitious 
 variables, we define two $2(n+1)$-ary terms as follows: $\kalap f(x_i,y_i\lcol i\leq n)=f(x_i,y_i\lcol i<n)$ and $\kalap g(x_i,y_i\lcol i\leq n)=g(x_i,y_i\lcol i<n)$. Since $\kalap f(x_i,x_i\lcol i\leq n)\approx \kalap g(x_i,x_i\lcol i\leq n)$ clearly holds in $\var V$, $\mcond {n+1}$ gives a $4(n+1)$-ary term $\kalap h$ such that the identities 
\begin{align}\label{kalapfgHwz}
\begin{aligned}
f(x_i,y_i\lcol i < n)\approx \kalap f(x_i,y_i\lcol i\leq n)&\approx \kalap h(x_i,y_i\lcol i\leq n  \lsep x_i,y_i\lcol i\leq n ),\cr 
g(x_i,y_i\lcol i < n)\approx\kalap g(x_i,y_i\lcol i\leq n) &\approx \kalap h(y_i,x_i\lcol i\leq n  \lsep x_i,y_i\lcol i\leq n )
\end{aligned}
\end{align}
hold in $\var V$. Define a $4n$-ary term~$h$ by letting
$h(u_i,v_i\lcol i<n\lsep x_i,y_i\lcol i<n)$ to be $\kalap
h(u_i,v_i\lcol i<n\lsep u_{n-1},v_{n-1}\lsep  x_i,y_i\lcol
i<n \lsep x_{n-1},y_{n-1})$.
It follows from \eqref{kalapfgHwz} that, with this~$h$,  \eqref{Mn1} and~\eqref{Mn2} hold in~$\var V$. Hence $\mcond {n}$ holds in~$\var V$, proving part~\eqref{propchargooda} of Theorem~\ref{propchargood}.

To prove part~\eqref{propchargoodb}, we construct a variety $\var V$ generated by an algebra $\alg A=(A;f,g)$ such that $\mcond {n}$ holds but $\mcond {n+1}$ fails in~$\var V$. Let $A=\set{0,1,\ldots, 2n+2}$, and denote $A\setminus\set 0$ by $A^+$. We define a $2(n+1)$-ary operation $f$  on~$A$ by the following rule:
\begin{align*}
f(a_i,b_i\lcol i\leq n)=
\begin{cases} 1,&\text{if }\set{a_i,b_i\lcol i\leq n}=A^+,\cr
0,&\text{otherwise.}
\end{cases}
\end{align*}
Similarly, we also define a $2(n+1)$-ary operation $g$ on~$A$ as follows:
\begin{align*}
g(a_i,b_i\lcol i\leq n)=
\begin{cases} 2,&\text{if }\set{a_i,b_i\lcol i\leq n}=A^+,\cr
0,&\text{otherwise.}
\end{cases}
\end{align*}
This way we have defined $\alg A=(A;f,g)$ and~$\var V$. 

The identity  $f(x_i,x_i\lcol i\leq n)\approx g(x_i,x_i\lcol i\leq n)$ clearly holds in~$\var V$ since both sides induce the constant $A^{n+1}\to\set 0$ map in~$\alg A$.     Suppose for a contradiction that $\mcond {n+1}$ holds in~$\var V$. Then there exists a $4(n+1)$-ary term $h$ such that  \eqref{Mn1} and~\eqref{Mn2} hold in $\var V$ with the above-defined $f$ and~$g$. 
Then $h$ is not a projection since neither~$f$, nor~$g$ is projection. Therefore the term $h$ has an outermost operation, which is either $f$ or~$g$. 
If the outermost operation is~$f$, then the term function~$\realiz h{\alg A}$, induced by $h$ on $\alg A$, cannot take the value 2, whence \eqref{Mn2} fails in $\alg A$. Similarly, if the outermost operation is~$g$, then \eqref{Mn1} fails by the 1-2 symmetry. Therefore, $\mcond {n+1}$ fails in~$\var V$. 

To show that $\mcond {n}$ holds in $\var V$, observe that any two $2n$-ary terms that are not projections are equivalent in $\var V$ since they induce the same constant map $A^{2n}\to\set 0$ in $\alg A$. 
Therefore, for any two $2n$-ary terms $\gyemant f$ and $\gyemant g$, either none of them is a projection and 
 we can let $\gyemant h(u_i,v_i\lcol i<n\lsep x_i,y_i\lcol i<n)= \gyemant f(x_i,y_i\lcol i<n)$, or both are projections and we can trivially find an appropriate $\gyemant h$. This proves that $\mcond {n}$ holds in $\var V$.

Next, to prove part~\eqref{propchargoodc}, suppose for a contradiction that \tolprop{} is equivalent to a finite~$\Sigma$. We can assume that $\Sigma=\set{\sigma}$ is a singleton since otherwise we can form the conjunction of all members of~$\Sigma$. Taking Theorem~\ref{thmmain} into account, we obtain that $\sigma$ is equivalent to $\set{\mcond k:k\in \mathbb N}$. 
Notice that, by introducing unary relations instead of components and replacing heterogeneous operations by usual relations, heterogeneous 
 algebras can easily be described 
by usual relational systems. Thus the compactness theorem is valid for heterogeneous algebras, and we conclude that  there is a finite set $S\subseteq \mathbb N$ such that $\set{\mcond k: k\in S}$ implies~$\sigma$.
Let $n$ be the largest element of~$S$. By part~\eqref{propchargooda},  $\mcond n$ in itself implies~$\sigma$ and, therefore,~\tolprop{}. Hence, again by Theorem~\ref{thmmain}, $\mcond n$ implies $\mcond {n+1}$, which contradicts part~\eqref{propchargoodb}.  
\end{proof}

\section{Applications}

Next, we give some consequences of
Theorem~\ref{thmmain}. In \weauthors{G.~Cz\'edli and
    G.~Gr\"atzer~}\cite{czggg} it is proved that the variety
  of all lattices satisfies
  \tolprop{}. Theorem~\ref{thmmain} yields the much stronger
  statement that every variety of lattices satisfies this
  property.

\begin{corollary}\label{corLatvR} Assume that $\var V$ is a variety with the following properties.
\begin{enumeratei}
\item\label{corLatvRa} $\var V$ has two binary terms, $\vee$ and $\wedge$, that satisfy the lattice axioms;
\item\label{corLatvRb} for each operation symbol $f$, say $n$-ary, $\var V$ satisfies the identity 
\[f(x_i\wedge y_i\lcol i<n) \wedge f(x_i\lcol i<n)\approx f(x_i\wedge y_i\lcol i<n)\text.
\]
$($In other words, all operations are monotone with respect to the lattice reduct.$)$
\end{enumeratei}
Then $\var V$ satisfies~\tolprop{}.
\end{corollary}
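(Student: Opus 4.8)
The plan is to derive the corollary from Theorem~\ref{thmmain}, so it suffices to verify that $\var V$ satisfies $\mcond n$ for every $n\in\mathbb N$. I would first record a consequence of hypothesis~\eqref{corLatvRb}: since $\vee$ and $\wedge$ are monotone with respect to the order $\le$ of the lattice reduct (a standard fact that follows from the lattice axioms), a routine induction on term complexity shows that \emph{every} term of $\var V$ is monotone in all of its variables with respect to $\le$; in particular, both $f$ and $g$ in the argument below are monotone. (To turn~\eqref{corLatvRb} into full monotonicity of a basic operation~$F$, note that $\bar u\le\bar v$ gives $\bar u=\bar u\wedge\bar v$, so $F(\bar u)=F(\bar u\wedge\bar v)\le F(\bar v)$.)

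Now fix $n$ and let $f,g$ be $2n$-ary terms with $f(x_i,x_i\lcol i<n)\approx g(x_i,x_i\lcol i<n)$ in $\var V$. The key step is to exhibit a $4n$-ary term $h$ witnessing $\mcond n$; I propose
\begin{equation*}
h(p_i,q_i\lcol i<n\lsep u_i,v_i\lcol i<n)\ =\ f(p_i\wedge u_i,\ q_i\wedge v_i\lcol i<n)\ \vee\ g(q_i\wedge u_i,\ p_i\wedge v_i\lcol i<n)\text.
\end{equation*}
To check~\eqref{Mn1} I would substitute $p_i:=x_i,\ q_i:=y_i,\ u_i:=x_i,\ v_i:=y_i$: by idempotency of $\wedge$ the $f$-summand becomes $f(x_i,y_i\lcol i<n)$, while the $g$-summand becomes $g(x_i\wedge y_i,\ x_i\wedge y_i\lcol i<n)$, which equals $f(x_i\wedge y_i,\ x_i\wedge y_i\lcol i<n)$ by the assumed diagonal identity and is $\le f(x_i,y_i\lcol i<n)$ by monotonicity; hence the join equals $f(x_i,y_i\lcol i<n)$. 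Identity~\eqref{Mn2} follows by the mirror-image substitution $p_i:=y_i,\ q_i:=x_i,\ u_i:=x_i,\ v_i:=y_i$: this degenerates the $f$-summand to $f(x_i\wedge y_i,\ x_i\wedge y_i\lcol i<n)=g(x_i\wedge y_i,\ x_i\wedge y_i\lcol i<n)\le g(x_i,y_i\lcol i<n)$ and leaves the $g$-summand as $g(x_i,y_i\lcol i<n)$, so the join equals $g(x_i,y_i\lcol i<n)$. Thus $\mcond n$ holds for all $n$, and Theorem~\ref{thmmain} yields~\tolprop{}.

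The only genuine difficulty is discovering the term $h$. The idea behind it is that replacing the argument list $\langle x_i,y_i\rangle$ by $\langle x_i\wedge u_i,\ y_i\wedge v_i\rangle$ inside $f$ is harmless under the substitution producing~\eqref{Mn1}, yet collapses exactly to the \emph{diagonal} list $\langle x_i\wedge y_i,\ x_i\wedge y_i\rangle$ under the substitution producing~\eqref{Mn2}, and the symmetric modification inside the $g$-summand behaves oppositely. This is precisely what lets the diagonal hypothesis $f(x_i,x_i\lcol i<n)\approx g(x_i,x_i\lcol i<n)$ come into play and lets monotonicity absorb the spurious summand in each join. Once $h$ has been guessed, the remaining verification is the short computation above, so no serious obstacle remains.
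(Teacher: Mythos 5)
Your proposal is correct and takes essentially the same route as the paper: the witnessing term $h$ you define coincides with the paper's term up to commutativity of $\wedge$ and a renaming of variables, and the verification of \eqref{Mn1} and \eqref{Mn2} via monotonicity of terms and the diagonal identity $f(x_i,x_i\lcol i<n)\approx g(x_i,x_i\lcol i<n)$ is the same computation. Your preliminary remark justifying that all terms are monotone is a reasonable (and correct) elaboration of what the paper uses implicitly.
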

In virtue of this corollary, every variety of lattices
satisfies \tolprop{}. So does every variety of lattices with
involution; see, for example, \weauthors{I.~Chajda  and   G.~Cz\'edli~}\cite{chczInv} for the definition.
\begin{proof}[Proof of Corollary~\ref{corLatvR}]
Assume that $f$ and~$g$ are $2n$-ary terms such that Identity~\eqref{Mn0} holds in~$\var V$. Define a $4n$-ary term $h$ as follows:
\begin{align*}
h(u_i,v_i\lcol i<n\lsep x_i,y_i\lcol i<n) =
f(x_i\wedge u_i, y_i\wedge v_i\lcol i<n)\vee
g(x_i\wedge v_i, y_i\wedge u_i\lcol i<n)\text.
\end{align*}
Using \eqref{Mn0} and the assumption that the terms of~$\var V$ are monotone with respect to the lattice order~$\leq$ induced by $\wedge$ and~$\vee$, we conclude that 
\begin{align*}
h(x_i,y_i\lcol i<n\lsep x_i,y_i\lcol i<n) 
&\approx f(x_i\wedge x_i, y_i\wedge y_i\lcol i<n)\vee
g(x_i\wedge y_i, y_i\wedge x_i\lcol  i<n)\cr
&\approx f(x_i, y_i\lcol i<n)\vee
f(x_i\wedge y_i, x_i\wedge y_i\lcol  i<n)\cr
&\approx f(x_i, y_i\lcol i<n)
\end{align*} 
holds in~$\var V$. Similarly, 
\begin{align*}
h(y_i,x_i\lcol i<n\lsep x_i,y_i\lcol i<n) 
&\approx f(x_i\wedge y_i, y_i\wedge x_i\lcol i<n)\vee
g(x_i\wedge x_i, y_i\wedge y_i\lcol i<n)\cr
&\approx g(x_i\wedge y_i, x_i\wedge y_i\lcol i<n)\vee
g(x_i,y_i\lcol i<n)\cr
&\approx g(x_i, y_i\lcol i<n)\text.
\end{align*} 
Therefore, $\mcond n$ holds in~$\var V$, and Theorem~\ref{thmmain} applies.
\end{proof}
The next three corollaries exemplify how to apply Theorem~\ref{thmmain} for varieties in which the terms and identities are easy to handle. 
While the proof above allowed us to enrich the lattice structure with further monotone operations, the next proof 
seems not to allow a similar enrichment.

\begin{corollary}\label{corSmLaTs}
The variety of semilattices satisfies~\tolprop{}.
\end{corollary}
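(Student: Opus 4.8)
The plan is to verify condition $\mcond n$ for the variety $\var S$ of semilattices, so that Theorem~\ref{thmmain} applies. Fix $n\in\mathbb N$ and $2n$-ary terms $f$ and~$g$ with $f(x_i,x_i\lcol i<n)\approx g(x_i,x_i\lcol i<n)$ in~$\var S$. A semilattice term (written multiplicatively, with the operation idempotent, commutative, associative) is, up to the identities of~$\var S$, determined by the nonempty set of variables occurring in it; so $f$ corresponds to a nonempty subset $F\subseteq\set{0,\ldots,2n-1}$ (thinking of the variable list as $x_0,y_0,x_1,y_1,\ldots$, i.e.\ even indices are the $x$'s, odd ones the $y$'s) and $g$ to a nonempty $G$. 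The hypothesis identity says: after identifying, for each $i<n$, the variables in position $2i$ and $2i+1$, the two resulting variable sets coincide. In other words, $F$ and~$G$ have the same ``shadow'' under the map $2i\mapsto i$, $2i+1\mapsto i$; equivalently $F$ and $G$ meet exactly the same pairs $\set{2i,2i+1}$.

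Next I would simply guess $h$ to be the term whose variable set is the ``merge'' of the two copies; concretely, with variables named $u_0,v_0,\ldots,u_{n-1},v_{n-1}$ for the first $2n$ slots and $x_0,y_0,\ldots,x_{n-1},y_{n-1}$ for the last $2n$ slots, let $h$ be the product over all variables indexed by $F$ among the $x,y$-block together with all variables indexed by $G$ among the $u,v$-block — but with the twist demanded by \eqref{Mn2}, where the first block gets its $x$ and $y$ arguments swapped. So I would set, say, $h(u_i,v_i\lcol i<n\lsep x_i,y_i\lcol i<n)$ equal to the semilattice product of $\set{r_k : k\in G}$ (where $r_{2i}=u_i$, $r_{2i+1}=v_i$) and $\set{s_k : k\in F}$ (where $s_{2i}=x_i$, $s_{2i+1}=y_i$). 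Then \eqref{Mn1} asks that substituting $u_i\mapsto x_i$, $v_i\mapsto y_i$ into $h$ gives back~$f$: the second factor already is $f(x_i,y_i\lcol i<n)$, and the first factor becomes the product of variables indexed by $G$, which after the substitution lies among $\set{x_i,y_i}$; this is ``absorbed'' into $f$ precisely when every variable it contributes already occurs in~$f$. That is where the shadow condition enters, but it is not quite enough as stated, so the definition of $h$ needs a small adjustment.

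The main obstacle, and the point requiring care, is exactly this absorption: having the same shadow guarantees $G$ and $F$ hit the same index-pairs, but $G$ might contain $x_i$ (i.e.\ $2i\in G$) while $F$ contains only $y_i$ (i.e.\ $2i\in F$ is false but $2i+1\in F$), so after the substitution $u_i\mapsto x_i$ the extra factor $x_i$ is \emph{not} absorbed by $f(x_i,y_i\lcol i<n)$. The fix is to build the absorption into $h$ itself: in the $u,v$-block, wherever we want the argument contributed, pair it (via $\wedge$, i.e.\ the semilattice product) with the corresponding argument that we \emph{know} will be present after either substitution. Concretely, replace each factor $u_i$ coming from $2i\in G$ by the product $u_i s_{k(i)}$ where $k(i)$ is some index in $F$ lying in the same pair-class when available, and more robustly pair it with \emph{both} $x_i$ and $y_i$ from the last block (which cost nothing in a semilattice). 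After this patch, under $u_i\mapsto x_i,\ v_i\mapsto y_i$ every extra factor is $\le$ something already in~$f$ so it disappears, giving \eqref{Mn1}; and under $u_i\mapsto y_i,\ v_i\mapsto x_i$ together with the swap in \eqref{Mn2}, the second block becomes $g(x_i,y_i\lcol i<n)$ while the (patched) first block contributes exactly the variables of $G$ read through the swap, all absorbed into $g$ by the same shadow argument, giving \eqref{Mn2}. I would carry out the bookkeeping once carefully to confirm the patched $h$ works, then invoke Theorem~\ref{thmmain}.
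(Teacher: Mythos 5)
Your overall strategy (verify $\mcond n$ and invoke Theorem~\ref{thmmain}, using the fact that a semilattice term is determined by its set of variables) is the same as the paper's, but your construction of $h$ has a genuine gap that the proposed patch cannot repair. The flaw is the appeal to ``absorption'': in the variety of semilattices an identity $s\approx t$ holds if and only if the two terms have \emph{exactly} the same variable sets, so an unwanted variable can never be made to ``disappear'' because it is $\le$ something else --- that reasoning uses the absorption law $a\vee(a\wedge b)\approx a$, which is available in the lattice case (and is precisely how Corollary~\ref{corLatvR} is proved) but not here, where there is only one operation. Concretely, multiplying the offending factor $u_i$ by further variables only \emph{enlarges} the support of $h$, and your ``more robust'' move of throwing in both $x_i$ and $y_i$ is not free: if $f$ contains $x_i$ but not $y_i$, then an $h$ containing $y_i$ in the last block already violates~\eqref{Mn1}. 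So in your problem case ($2i\in G$, $2i+1\in F$, i.e.\ $g$ uses only $x_i$ and $f$ only $y_i$) no amount of extra factors fixes the identity; in addition, your check of \eqref{Mn2} is off, since under that substitution the last block of $h$ is unchanged (it still contributes the variables of $F$, not of $g$) while the first block contributes the \emph{swapped} copy of $G$.

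The paper resolves exactly this point differently: $h$ is defined coordinatewise by a case distinction (Table~\ref{slattableaux}) on which of $x_i,y_i$ occur in $f$ and in $g$ (the hypothesis~\eqref{Mn0} only guarantees that $f$ meets $\set{x_i,y_i}$ iff $g$ does). The crucial cases are handled not by adding factors but by choosing the \emph{right} variables: e.g.\ if $f$ contains only $y_i$ and $g$ only $x_i$, then $h$ contains $v_i$ alone, which becomes $y_i$ under the \eqref{Mn1} substitution and $x_i$ under the \eqref{Mn2} substitution, giving exact equality of variable sets on both sides; similarly, if $f$ contains both and $g$ only $x_i$, then $h$ contains $v_i$ and $x_i$. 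Your construction, which always takes the first block indexed by $G$ and the last block indexed by $F$ and then tries to correct by multiplication, cannot produce these choices, so the argument does not go through as written; rewriting the definition of $h$ as such a per-index case analysis is what is needed.
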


\begin{proof} Up to equivalence,  each semilattice term is characterized by the variables occurring in it. Assume that $f$ and~$g$ are $2n$-ary terms such that \eqref{Mn0} holds in the variety~$\var V$ of semilattices. We define an appropriate $4n$-ary semilattice term $h(u_i,v_i\lcol i<n\lsep x_i,y_i\lcol i<n)$ by specifying which variables occur in it. 
This is done for each $i$ separately by
Table~\ref{slattableaux}; notice that, by \eqref{Mn0}, $f$
contains at least one of $x_i$ and~$y_i$  if{f} so does~$g$.
Thus we obtain an $h$ witnessing that $\mcond n$ holds in~$\var V$, and Theorem~\ref{thmmain} applies. 
\end{proof}

\begin{table}[h]
\begin{tabular}{ | c | c | c | c | c |c | c | c |} \hline
\multicolumn{2}{| c }{Occurs in $f$} & \multicolumn{2}{| c } {Occurs in $g$} & \multicolumn{4}{| c |} { Occurs in $h$  }  \\ \hline
\kr$x_i$\kr & \kr$y_i$\kr & \kr$x_i$\kr & \kr$y_i$\kr & \kr$u_i$\kr & \kr$v_i$\kr& \kr$x_i$\kr &   \kr$y_i$\kr   \\ \hline
\no&\no&    \no&\no&       \no&\no&      \no&\no  \\ \hline
\yes&\no&    \yes&\no&       \no&\no&      \yes&\no  \\ \hline
\no&\yes&    \no&\yes&       \no&\no&      \no&\yes  \\ \hline
\yes&\no&    \no&\yes&       \yes&\no&      \no&\no  \\ \hline
\no&\yes&    \yes&\no&       \no&\yes&      \no&\no  \\ \hline
\yes&\yes&    \yes&\yes&       \no&\no&      \yes&\yes  \\ \hline
\yes&\yes&    \yes&\no&       \no&\yes&      \yes&\no  \\ \hline
\yes&\yes&    \no&\yes&       \yes&\no&      \no&\yes  \\ \hline
\yes&\no&    \yes&\yes&       \yes&\no&      \yes&\no  \\ \hline
\no&\yes&    \yes&\yes&       \no&\yes&      \no&\yes  \\ \hline
\multicolumn{8}{ c } {   }  \\ 
\end{tabular}
\caption{Defining $h$ in the proof of Corollary~\ref{corSmLaTs}}
\label{slattableaux}
\end{table}

The next statement is a particular case of the result in \cite{chczgrhref} on balanced varieties. The proof we give here is entirely different from that in \cite{chczgrhref}.

\begin{corollary}\label{simtypVAr} For each tolerance~$\tra$ of an algebra~$\alg A$, there exist an algebra $\alg B$, a~congruence $\cra$ of\/~$\alg B$, and a homomorphism $\phi\colon \alg B\to \alg A$ such that $\phi(\cra)=\tra$. 
\end{corollary}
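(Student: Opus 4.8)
The plan is to deduce Corollary~\ref{simtypVAr} from Theorem~\ref{thmmain} by showing that \emph{the variety of all algebras of a given similarity type satisfies} \tolprop{}. Fix the type of $\alg A$ and let $\var V$ be the variety of all algebras of this type. Since $\alg A\in\var V$, once $\var V$ is known to satisfy \tolprop{}, the definition of \tolprop{} applied to $\alg A$ and the given tolerance $\tra$ produces an algebra $\alg B\in\var V$, a congruence $\cra$ of $\alg B$, and a homomorphism $\phi\colon\alg B\to\alg A$ with $\phi(\cra)=\tra$, which is exactly the assertion of the corollary. So by Theorem~\ref{thmmain} it suffices to verify that $\mcond n$ holds in $\var V$ for every $n\in\mathbb N$.

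To that end, let $f$ and $g$ be $2n$-ary terms with $f(x_i,x_i\lcol i<n)\approx g(x_i,x_i\lcol i<n)$ in $\var V$. The key observation is that an identity of the variety of all algebras of a type is merely a syntactic coincidence of terms in the absolutely free algebra; hence $f(x_i,x_i\lcol i<n)$ and $g(x_i,x_i\lcol i<n)$ are \emph{literally the same term}. Viewing terms as finite labeled trees, the diagonal substitution $y_i\mapsto x_i$ alters no operation symbol and only relabels certain leaves; therefore $f$ and $g$ must have the same underlying tree with the same operation symbol at every internal node, and at every leaf position the label of $f$ belongs to $\set{x_i,y_i}$ if and only if the label of $g$ does, with the same index~$i$.

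Given this, I would build the required $4n$-ary term $h(u_i,v_i\lcol i<n\lsep x_i,y_i\lcol i<n)$ to have the common tree shape and the common internal labels of $f$ and $g$, with the leaves chosen position by position: if $f$ and $g$ carry the same leaf there (both $x_i$, or both $y_i$), let $h$ carry that variable taken from its \emph{last} block; if $f$ carries $x_i$ while $g$ carries $y_i$, let $h$ carry $u_i$; and if $f$ carries $y_i$ while $g$ carries $x_i$, let $h$ carry $v_i$. A routine check of these four cases shows that the substitution $u_i\mapsto x_i,\ v_i\mapsto y_i$ turns $h$ into $f$, while $u_i\mapsto y_i,\ v_i\mapsto x_i$ turns $h$ into $g$; that is, $h(x_i,y_i\lcol i<n\lsep x_i,y_i\lcol i<n)\approx f(x_i,y_i\lcol i<n)$ and $h(y_i,x_i\lcol i<n\lsep x_i,y_i\lcol i<n)\approx g(x_i,y_i\lcol i<n)$, so \eqref{Mn1} and~\eqref{Mn2} hold and $\mcond n$ is verified.

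The only genuine content is the second step, namely recognizing that equality of two $2n$-ary terms under the ``diagonal'' substitution $y_i\mapsto x_i$ forces a common tree shape with matching leaf indices; everything else is bookkeeping, and the corollary then follows from Theorem~\ref{thmmain} and the definition of \tolprop{}. As a bonus, unwinding the proof of Theorem~\ref{thmmain} for this particular $\var V$ shows that the witnessing algebra $\alg B$ may even be taken to be an absolutely free algebra.
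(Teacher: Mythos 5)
Your proposal is correct and follows essentially the same route as the paper: reduce to verifying $\mcond n$ for the variety of all algebras of the given type via Theorem~\ref{thmmain}, note that the hypothesis forces $f$ and $g$ to be literally the same term after the diagonal substitution (hence to share their ``linearized'' shape), and define $h$ leaf by leaf exactly as in the paper's Table~\ref{tableaux2}. The tree-labelling justification you give is just a more explicit version of the paper's remark that only trivial identities hold in this variety, so $\kalap f=\kalap g$.
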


\begin{proof} Let $\var V$ be the class of all algebras similar to (have the same type as)~$\alg A$. The corollary asserts that $\var V$ satisfies \tolprop{}. Let $f$ and~$g$ be $2n$-ary terms such that  \eqref{Mn0} holds in~$\var V$. 
Let $\kalap f=\kalap f(z_j\lcol j<s)$ be the term we obtain
from $f$ by distinguishing its variables. For example, if
$f(x_0,y_0)= \bigl((y_0x_0)(x_0y_0)\bigr)x_0$ (in the language of one
binary operation),  then $\kalap f(z_i\lcol i<5)=\bigl((z_0z_1)(z_2z_3)\bigr)z_4$. 
Define $\kalap g$ analogously.
Since only trivial identities hold in~$\var V$, the terms $f(x_i,x_i\lcol i<n)$ and 
$g(x_i,x_i\lcol i<n)$ are the same (equal sequences of symbols) and, moreover, $\kalap f=\kalap g$. Let $\kalap h=\kalap f$.

\begin{table}[h]
\begin{tabular}{ | c | c | c |} \hline
{$z_j$ in $f$} &  {$z_j$ in $g$} &  {$z_j$ in $h$}  \\ \hline 
$x_i$ & $x_i$ & $x_i$ \\ \hline 
$y_i$ & $y_i$ & $y_i$ \\ \hline 
$x_i$ & $y_i$ & $u_i$ \\ \hline 
$y_i$ & $x_i$ & $v_i$ \\ \hline 
\multicolumn{3}{ c } {   }  \\ 
\end{tabular}
\caption{Defining $h$ in the  proof of Corollary~\ref{simtypVAr}}
\label{tableaux2}
\end{table}

By substituting one of the elements of $\set{u_i,v_i,x_i,y_i\lcol i<n}$ for~$z_j$ in $\kalap h$ according to Table~\ref{tableaux2}, we clearly obtain a term $h(u_i,v_i\lcol i<n\lsep x_i,y_i\lcol i<n)$  witnessing that 
 $\mcond n$ holds in~$\var V$.
\end{proof}

A variety is \emph{unary} if all of its basic operations are at most unary.

\begin{corollary}\label{CorUnry} Every unary variety satisfies~\tolprop{}. 
\end{corollary}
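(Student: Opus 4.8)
The plan is to verify the \Malcev-like condition $\mcond n$ for every $n\in\mathbb N$ and then invoke Theorem~\ref{thmmain}. So suppose $\var V$ is a unary variety and $f,g$ are $2n$-ary terms with the identity $f(x_i,x_i\lcol i<n)\approx g(x_i,x_i\lcol i<n)$ holding in $\var V$. The crucial structural observation about unary varieties is that every term is, up to the identities of $\var V$, of a very restricted shape: either it is $w(x_j)$ for a single one of its variables $x_j$ and a (possibly empty) word $w$ of unary operation symbols, or -- if $\var V$ has a constant (nullary) symbol or the word-monoid forces a constant -- it is a ground term depending on no variable at all. In the first case, the identity $f(x_i,x_i\lcol i<n)\approx g(x_i,x_i\lcol i<n)$, after identifying $x_i$ with $y_i$, becomes an equation between $w_f(x_{j(f)})$ and $w_g(x_{j(g)})$; since distinct generators of a free algebra are not collapsed, this forces $j(f)=j(g)$ (call it the common index $k$) and $w_f=w_g$ (call it $w$) as elements of the term algebra modulo $\var V$. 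In the second case both $f$ and $g$ are already constant terms and $f\approx g$ holds outright.

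Given this, I would define $h$ as follows. In the ``constant'' case, simply let $h$ be the common constant term $f$; then both \eqref{Mn1} and \eqref{Mn2} hold because every side equals that constant. In the ``honest unary'' case, where $f(x_i,y_i\lcol i<n)$ is $w(x_k)$ if $f$'s live variable sits in the $x$-slot $k$, or $w(y_k)$ if it sits in the $y$-slot $k$ -- and similarly $g(x_i,y_i\lcol i<n)$ is $w(x_k)$ or $w(y_k)$ -- I would set $h$ to read the common word $w$ applied to whichever of the four available variables in slot $k$ matches $f$ on its first block of arguments and matches $g$ on the doubled second block. Concretely, $h(u_i,v_i\lcol i<n\lsep x_i,y_i\lcol i<n)=w(\ell_k)$ where $\ell_k\in\set{u_k,v_k,x_k,y_k}$ is: $x_k$ if $f$ uses the $x$-slot and $g$ uses the $x$-slot; $y_k$ if both use the $y$-slot; $u_k$ if $f$ uses the $x$-slot and $g$ uses the $y$-slot; $v_k$ if $f$ uses the $y$-slot and $g$ uses the $x$-slot. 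This is exactly the four-row bookkeeping of Table~\ref{tableaux2} in the proof of Corollary~\ref{simtypVAr}, now with the single relevant index $k$, and one checks directly that substituting $(x_i,y_i\lcol i<n\lsep x_i,y_i\lcol i<n)$ into $h$ recovers $f(x_i,y_i\lcol i<n)$ while substituting $(y_i,x_i\lcol i<n\lsep x_i,y_i\lcol i<n)$ recovers $g(x_i,y_i\lcol i<n)$, so \eqref{Mn1} and \eqref{Mn2} hold in $\var V$.

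The main point to get right -- and what I expect to be the only genuine obstacle -- is the normal-form claim for terms in a unary variety, in particular handling the possibility that the free algebra on one generator collapses to something smaller (for instance a finite monoid of unary operations, or the presence of a zero/constant forcing ``$w(x)\approx c$''). One has to argue that if the single-variable reduct of the identity $f(x,x,\dots)\approx g(x,x,\dots)$ holds, then $f$ and $g$ already agree in which original variable (if any) survives and in the word applied to it, \emph{as elements of $\clonalg{\var V}$}; this is where disjointness of free generators in $\Free{\var V}{\set{x_i,y_i\lcol i<n}}$ is used, and one should be careful when the surviving element is actually a constant so that the choice of ``index $k$'' is vacuous and the constant case applies instead. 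Once that normalization is in hand, the construction of $h$ and the verification of the two identities are routine, and Theorem~\ref{thmmain} finishes the proof.
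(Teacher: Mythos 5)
Your overall architecture is the paper's: verify $\mcond n$ and invoke Theorem~\ref{thmmain}, with a four-way table in the ``both terms genuinely unary with the same live index'' case that coincides exactly with the paper's Table~\ref{tableaux3}. But there is a genuine gap in your normalization step. You claim that, writing $f\approx w_f$ applied to its live variable in pair $j(f)$ and $g\approx w_g$ applied to its live variable in pair $j(g)$, the identity $f(x_i,x_i\lcol i<n)\approx g(x_i,x_i\lcol i<n)$ forces $j(f)=j(g)$ ``since distinct generators of a free algebra are not collapsed.'' That argument is invalid: the elements being compared are $w_f(x_{j(f)})$ and $w_g(x_{j(g)})$, which are \emph{not} generators, and the identities of $\var V$ may collapse them without collapsing any generators. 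Concretely, take the unary variety with one operation $s$ defined by the identity $s(x)\approx s(y)$, and let $f(x_0,y_0,x_1,y_1)=s(x_0)$, $g(x_0,y_0,x_1,y_1)=s(x_1)$: then $f(x_0,x_0,x_1,x_1)\approx g(x_0,x_0,x_1,x_1)$ holds, yet the live indices differ. This situation is also not covered by your ``constant'' case as you state it, since $s(x_0)$ is not a ground term (the signature has no nullary symbol); your dichotomy ``$w(x_j)$ versus ground term'' misses terms that are syntactically unary but whose induced term functions are constant.

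This missing case is precisely where the paper's proof does its real work: when $j\neq k$, the identity $\vesszo f(x_j)\approx \vesszo g(x_k)$ in two distinct variables forces both unary terms to induce the same constant function on every member of $\var V$, hence $f(x_i,y_i\lcol i<n)\approx g(x_i,y_i\lcol i<n)$ holds outright and one may take $h(u_i,v_i\lcol i<n\lsep x_i,y_i\lcol i<n)=f(x_i,y_i\lcol i<n)$, which satisfies both \eqref{Mn1} and \eqref{Mn2}. Your closing paragraph shows you sensed this obstacle (``one should be careful when the surviving element is actually a constant''), but as written you neither prove the forced equality $j(f)=j(g)$ (it is false) nor give a constant case broad enough to absorb the semantically constant, syntactically unary terms. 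Once you replace the syntactic dichotomy by the semantic one (term function constant versus genuinely depending on one variable) and handle the constant case as above, your same-index table finishes the proof exactly as in the paper.
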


\begin{proof} We modify the proof of Corollary~\ref{simtypVAr} as follows. Assume that \eqref{Mn0} holds in~$\var V$. Since every term of~$\var V$ depends on at most one variable, there exist a~$j$ and a unary term~$\vesszo f$
such that $f(x_i,y_i\lcol i<n)\approx \vesszo f(x_j)$ or $f(x_i,y_i\lcol i<n)\approx \vesszo f(y_j)$ holds in~$\var V$. Similarly, there exist a~$k$ and a unary term~$\vesszo g$
such that $g(x_i,y_i\lcol i<n)\approx \vesszo g(x_k)$ or $g(x_i,y_i\lcol i<n)\approx \vesszo g(y_k)$ holds in~$\var V$. 

Assume first that $j\neq k$. Then \eqref{Mn0} yields that
$\vesszo f(x_j)\approx \vesszo g(x_k)$ holds in $\var V$,
and so $f(x_i,y_i\lcol i<n)$ and $g(x_i,y_i\lcol i<n)$
induce the same constant function on each $\alg A\in\var V$.
Hence we can define~$h$ by 
$h(u_i,v_i\lcol i<n\lsep x_i,y_i\lcol i<n)=f(x_i,y_i\lcol i<n)$. 

Secondly, assume that~$j=k$. Then \eqref{Mn0} yields that $\vesszo f(x_j)\approx \vesszo g(x_j)$ holds in~$\var V$. Clearly, we can  define~$h$ according to Table~\ref{tableaux3}.
\begin{table}[h]
\begin{tabular}{ | c | c | c |} \hline
{$f$ depends on} &  {$g$ depends on} &  {$h(u_i,v_i\lcol i<n\lsep x_i,y_i\lcol i<n)$}  \\ \hline 
$x_j$ & $x_j$ & $\vesszo f(x_j)$ \\ \hline 
$x_j$ & $y_j$ & $\vesszo f(u_j)$ \\ \hline 
$y_j$ & $x_j$ & $\vesszo f(v_j)$ \\ \hline 
$y_j$ & $y_j$ & $\vesszo f(y_j)$ \\ \hline 
\multicolumn{3}{ c } {   }  \\ 
\end{tabular}
\caption{Defining $h$ in the  proof of Corollary~\ref{CorUnry}}
\label{tableaux3}
\end{table}
\end{proof}

A systematic survey of known varieties with
\tolprop{} is not pursued in this paper.  We note
  that, as opposed to the previous corollaries,
  Theorem~\ref{thmmain} is not always the most convenient
  tool to prove the \tolprop{} property. For example, every
variety defined by a set of balanced identities satisfies
\tolprop{} by \weandrefauthors{I.~Chajda, G.~Cz\'edli,
  R.~Hala\v s, and yyy}\cite{chczgrhref}. In particular, so
do the variety of all semigroups and that of all commutative
semigroups.  We wonder what the situation is with
  other important varieties of semigroups.

\section{Natural \Malcev{} conditions and \tolprop{}}

We have shown in Corollary~\ref{corLatvR} that every
  variety of lattices with additional monotone operations 
  satisfies~\tolprop{}. A natural generalization would be to
  consider varieties with a \emph{majority term} (a
  ternary term $m$ such that the identities $m(x,x,y)\approx
  x$, $m(x,y,x)\approx x$, and $m(y,x,x)\approx x$ hold in~$\var
  V$). 
Lattices have such a term. Also, they constitute and \emph{idempotent} variety. (A variety $\var V$ is idempotent if $\,t(x,\ldots,x)\approx x$ is an identity  of~$\var V$ for every basic operation~$t$). The following   example shows that these conditions together are still not sufficient to establish~\tolprop{}.

\begin{proposition}\label{propcExMpl3}
There exists an idempotent variety $\var V$ with a majority
term such that \tolprop{} fails in~$\var V$ and $\var V$ is generated by a three element algebra.
\end{proposition}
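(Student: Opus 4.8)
The plan is to reduce everything to Theorem~\ref{thmmain} and then exhibit a suitable three‑element algebra whose clone is rigid enough to violate $\mcond 2$. By Theorem~\ref{thmmain} it suffices to produce an idempotent algebra $\alg A$ on a three‑element set which has a majority term and for which $\var V:=\var V(\alg A)$ fails $\mcond n$ for some $n$; I will take $n=2$. Put $A=\set{0,1,2}$ and let $\tra=\set{(0,0),(1,1),(2,2),(0,1),(1,0),(1,2),(2,1)}$, so that $(a,b)\in\tra$ iff $|a-b|\le 1$; note $(0,2)\notin\tra$. Let $m$ be the median operation of the chain $0<1<2$; it is a majority operation and preserves $\tra$, since $|\mathrm{med}(\mathbf a)-\mathrm{med}(\mathbf b)|\le\max_i|a_i-b_i|$. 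Let $\delta$ be the binary operation on $A$ with $\delta(a,b)=b$ whenever $b\in\set{0,1}$, with $\delta(2,2)=2$, and with $\delta(0,2)=\delta(1,2)=1$; it is idempotent and preserves $\tra$ (the only way it could fail is to output the pair $(0,2)$ or $(2,0)$ on a coordinatewise‑$\tra$‑related pair of inputs, and this is impossible since $\delta$ outputs $0$ only when its second argument is $0$ and outputs $2$ only at $(2,2)$). Finally let $f$ and $g$ be the $4$‑ary operations on $A$ with $f(x_0,x_0,x_1,x_1)=g(x_0,x_0,x_1,x_1)=\delta(x_0,x_1)$ on the ``diagonal plane'', with $f(0,1,2,2)=0$ and $g(1,1,2,1)=2$, and with $f$ (resp.\ $g$) taking the value $1$ on all remaining argument tuples. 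A short check shows $f$ and $g$ preserve $\tra$: such an operation could only fail by sending a coordinatewise‑$\tra$‑related pair of tuples to $(0,2)$ or $(2,0)$; the tuples on which $f$ takes the value $0$ are $(0,1,2,2)$ and the tuples $(x_0,x_0,0,0)$, the only tuple on which it takes the value $2$ is $(2,2,2,2)$, and none of the former is coordinatewise‑$\tra$‑related to the latter, and likewise for $g$. Put $\alg A=(A;m,f,g)$; it is idempotent, has the majority term $m$, is generated by a three‑element algebra, and $\tra$ is a compatible relation of it.

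It then remains to check that $\mcond 2$ fails in $\var V$. The hypothesis of $\mcond 2$ for this pair holds, because $f(x_0,x_0,x_1,x_1)\approx g(x_0,x_0,x_1,x_1)$ (both sides equal $\delta(x_0,x_1)$ on $\alg A$). Suppose, towards a contradiction, that some $8$‑ary term $h$ of $\var V$ satisfies \eqref{Mn1} and~\eqref{Mn2} for $(f,g)$. Substituting $(x_0,y_0,x_1,y_1)=(0,1,2,2)$ into \eqref{Mn1} gives $h(0,1,2,2,0,1,2,2)=f(0,1,2,2)=0$, and substituting $(x_0,y_0,x_1,y_1)=(1,1,2,1)$ into \eqref{Mn2} gives $h(1,1,1,2,1,1,2,1)=g(1,1,2,1)=2$. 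But the two argument tuples $(0,1,2,2,0,1,2,2)$ and $(1,1,1,2,1,1,2,1)$ are $\tra$‑related in every coordinate — the coordinate pairs are $(0,1),(1,1),(2,1),(2,2),(0,1),(1,1),(2,2),(2,1)$, all in $\tra$ — while $h$, being a term operation of $\alg A$, preserves the compatible relation $\tra$; hence $(0,2)\in\tra$, a contradiction. Thus $\mcond 2$ fails in $\var V$, and by Theorem~\ref{thmmain} so does~\tolprop{}.

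The construction above is essentially the whole argument, and the point where the real work sits — and the main obstacle I would expect — is in juggling the three competing requirements: that $\tra$ be a non‑transitive tolerance, that $f$ and $g$ share a diagonal (so the hypothesis of $\mcond n$ is met) yet take the two ``extreme'' values $0$ and $2$ at argument tuples whose forced $h$‑preimages are coordinatewise‑$\tra$‑related, and that everything remain idempotent. The last requirement is precisely what makes this harder than the construction in the proof of Theorem~\ref{propchargood}\eqref{propchargoodb}: there one may let $f$ and $g$ be essentially constant, whereas here the \Malcev‑like obstruction must be carried entirely by genuinely idempotent operations, and the rigidity $\tra$‑compatibility imposes — for instance, it already forces $\set{0,1}$ and $\set{1,2}$ to be subalgebras of $\alg A$ — leaves comparatively little room in which to place the values $0$ and $2$. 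Once a workable $\tra$ together with workable $f,g$ are pinned down, the rest goes as above.
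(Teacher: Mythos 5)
Your proof is correct; I checked the details (the tuples on which your $f$ and $g$ take the values $0$ and $2$, the $\tra$-compatibility of $m$, $f$, $g$, idempotency, the consistency of the definition of $f,g$ on and off the diagonal plane, and the two substitutions into \eqref{Mn1} and \eqref{Mn2}), and everything goes through. Your route is a genuine variant of the paper's, though built on the same skeleton: the paper also takes a three-element algebra $(A;f,g,m)$ with two idempotent quaternary operations agreeing on the diagonal, a majority-type operation, and a tolerance omitting a single unordered pair ($A^2\setminus\set{(1,2),(2,1)}$ there, your $A^2\setminus\set{(0,2),(2,0)}$ is just a relabelled analogue), with $f,g$ taking the two separated values at one special tuple. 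The difference is in how the failure of \tolprop{} is certified: the paper argues directly from the definition, pulling back pairs $(a,b),(c,d)\in\cra$ above $(1,0)$ and $(0,2)$ along an alleged $\phi$ with $\phi(\cra)=\tra$ and using transitivity of $\cra$ together with $f(x,x,y,y)\approx g(x,x,y,y)$ to land the forbidden pair $(1,2)$ in $\tra$; it never invokes Theorem~\ref{thmmain}. You instead refute $\mcond 2$ and quote Theorem~\ref{thmmain}, turning the obstruction into a purely clone-theoretic one: any witnessing $8$-ary term $h$ would be a term operation of $\alg A$, hence $\tra$-preserving, yet forced to send $\tra$-related tuples to $(0,2)$. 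The paper's version is self-contained and uses simpler (almost constant) operations $f,g$; yours buys the convenience of never reasoning about an arbitrary preimage algebra $\alg B$ and its congruence, at the price of the more delicate bookkeeping needed to make $f$ and $g$ agree on the whole diagonal plane (so the hypothesis of $\mcond 2$ holds) while staying idempotent and $\tra$-compatible. One cosmetic remark: the compatibility of the auxiliary $\delta$ and the remark about $\set{0,1}$, $\set{1,2}$ being subuniverses are not needed for the argument, since you verify $f$, $g$, $m$ against $\tra$ directly.
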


\begin{proof} Let $A=\set{0,1,2}$, and define an algebra $\alg A=(A;f,g,m)$, where $f$ and~$g$ are idempotent  quaternary operations and $m$ is a ternary majority operation defined as follows. 
\begin{align*}
f(x_i\lcol i<4)&=
\begin{cases} 1,&\text{if }(x_i\lcol i<4)\in\set{(1,1,1,1),(1,0,0,2)}   ,\cr
2,&\text{if }(x_i\lcol i<4)=(2,2,2,2),\cr
0,&\text{otherwise;}
\end{cases}\cr 
g(x_i\lcol i<4)&=
\begin{cases} 2,&\text{if }(x_i\lcol i<4)\in\set{(2,2,2,2),(1,0,0,2)}   ,\cr
1,&\text{if }(x_i\lcol i<4)=(1,1,1,1),\cr
0,&\text{otherwise;}
\end{cases}\cr 
m(x_0,x_1,x_2)&=
\begin{cases} 0,&\text{if }|\set{x_0,x_1,x_2}|=3 ,\cr
j,&\text{if }|\set{i: x_i=j}|\geq 2\text{.}
\end{cases} 
\end{align*}
Then~$\var V$, the variety generated by~$\alg A$, is an
idempotent variety with a majority term. 
Consider the relation $\tra
=A^2\setminus\set{(1,2),(2,1)}$. We show that $\tra$ is a tolerance of~$\alg A$.

Suppose for a contradiction that $f$ does not preserve~$\tra$. Then there are $(a_i,b_i)\in \tra$  such that $\bigl(f(a_i\lcol i<4), f(b_i\lcol i<4)\bigr)\notin \tra$. By symmetry, we can assume that $\bigl(f(a_i\lcol i<4), f(b_i\lcol i<4)\bigr)=(1,2)$. However, then $(a_0,b_0)=(1,2)\notin\tra$ is a contradiction. Hence $f$ preserves~$\tra$. 
So does  $g(x_i\lcol i<4)$ since it is the ``1-2 dual'' of $f(x_{3-i}\lcol i<4)$.
Next, suppose for a contradiction that $(a_i,b_i)\in \tra$ for $i<3$ but, say, 
$\bigl(m(a_i\lcol i<3), m(b_i\lcol i<3)\bigr)=(1,2)$. Then
at least two of the $a_i$ equal~1 and at least two of the
$b_i$ equal~2. Thus there is an $i\in\set{0,1,2}$ such that
$(a_i,b_i)=(1,2)\notin\tra$, which is a
contradiction. 
Therefore, $\tra$ is indeed a tolerance of~$\alg A$. 

Finally we show that \tolprop{} fails in~$\var V$. Suppose for a contradiction that $\alg B\in \var V$, $\cra$ is a congruence of $\alg B$, $\phi\colon \alg B\to \alg A$ is a homomorphism, and $\phi(\cra)=\tra$. Pick $(a,b), (c,d)\in\cra$ such that $\bigl((\phi(a),\phi(b))=(1,0)\in\tra$ and $\bigl((\phi(c),\phi(d)\bigr)=(0,2)\in\tra$. Observe that the identity $f(x,x,y,y)\approx g(x,x,y,y)$ holds in~$\var V$ since it holds in~$\alg A$. Therefore 
\[
f(a,b,c,d)\mathrel{\cra} f(a,a,d,d) = g(a,a,d,d)\mathrel{\cra} g(a,b,c,d),
\]
and we obtain the following contradiction:
\begin{align*}
(1,2)&=\bigl(f(1,0,0,2),\,g(1,0,0,2) \bigr)\cr
&=\bigl(f(\phi(a),\phi(b),\phi(c),\phi(d)  ),\,g(\phi(a),\phi(b),\phi(c),\phi(d)) \bigr)\cr
&=\bigl(\phi(f(a,b,c,d)),\,\phi(g(a,b,c,d))\bigr)  \in\phi(\cra)=\tra\text. \qedhere
\end{align*}
\end{proof}

Another frequently considered \Malcev{} condition is congruence
  permutability. Each congruence permutable variety~$\var
V$ satisfies~\tolprop{} since every tolerance of an algebra
in~$\var V$ is known to be a congruence, see
J.\,D.\,H.~Smith~\cite{smith} (explicitly) or
H.~Werner~\cite{werner} (implicitly). As an illustration, we
give a new proof, based on
Theorem~\ref{thmmain}.

\begin{corollary} Every congruence permutable variety satisfies \tolprop{}.
\end{corollary}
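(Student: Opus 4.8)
The plan is to invoke Theorem~\ref{thmmain}: a congruence permutable variety~$\var V$ has a \Malcev\ term~$p$, that is, a ternary term with $p(x,x,z)\approx z$ and $p(x,z,z)\approx x$ in~$\var V$, so it suffices to verify that $\mcond n$ holds in~$\var V$ for every $n\in\mathbb N$. Accordingly I would start from $2n$-ary terms $f$ and~$g$ satisfying \eqref{Mn0}, i.e.\ $f(x_i,x_i\lcol i<n)\approx g(x_i,x_i\lcol i<n)$ in~$\var V$, and construct a single $4n$-ary term~$h$ for which \eqref{Mn1} and~\eqref{Mn2} hold.

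The guiding idea is the familiar computation in the free algebra on $\set{x_i,y_i\lcol i<n}$: if $\cra$ is the congruence generated by the pairs $(x_i,y_i)$, then $f(x_i,y_i\lcol i<n)\mathrel{\cra} f(x_i,x_i\lcol i<n)=g(x_i,x_i\lcol i<n)\mathrel{\cra} g(x_i,y_i\lcol i<n)$ is a chain of length two, and its two steps can be closed simultaneously by applying~$p$ columnwise. To turn this into a term I would first choose an auxiliary term $r_i:=p(u_i,y_i,v_i)$, picked so that it collapses to~$x_i$ both under the substitution $(u_i,v_i)\mapsto(x_i,y_i)$ and under $(u_i,v_i)\mapsto(y_i,x_i)$, and then set
\[
h(u_i,v_i\lcol i<n\lsep x_i,y_i\lcol i<n):=p\Bigl(f(r_i,v_i\lcol i<n),\ f(x_i,x_i\lcol i<n),\ g(r_i,u_i\lcol i<n)\Bigr).
\]

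It then remains to check \eqref{Mn1} and~\eqref{Mn2} directly. Substituting $(u_i,v_i)\mapsto(x_i,y_i)$ turns the three arguments of~$p$ into $f(x_i,y_i\lcol i<n)$, $f(x_i,x_i\lcol i<n)$, and $g(x_i,x_i\lcol i<n)$; the last two coincide in~$\var V$ by \eqref{Mn0}, so $p(a,b,b)\approx a$ yields~\eqref{Mn1}. Substituting $(u_i,v_i)\mapsto(y_i,x_i)$ turns them into $f(x_i,x_i\lcol i<n)$, $f(x_i,x_i\lcol i<n)$, and $g(x_i,y_i\lcol i<n)$, and $p(b,b,c)\approx c$ yields~\eqref{Mn2}. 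Hence $\mcond n$ holds for every~$n$, and Theorem~\ref{thmmain} applies.

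The only delicate point is the bookkeeping: the auxiliary term $r_i$ must be chosen so that it produces the \emph{same} value under both of the relevant substitutions of the first block of variables, since the more naive choices such as $p(u_i,v_i,x_i)$ or $p(u_i,x_i,v_i)$ fail for one of them, and the interleaved argument order of $f$ and~$g$ has to be tracked with care. Once $r_i$ is chosen correctly, the two verifications are immediate from the two \Malcev\ identities together with~\eqref{Mn0}. (Alternatively, one could note that in a congruence permutable variety the tolerance generated by the pairs $(x_i,y_i)$ in the free algebra is transitive, hence contains $\bigl(f(x_i,y_i\lcol i<n),g(x_i,y_i\lcol i<n)\bigr)$, and read~$h$ off from Lemma~\ref{lemmatolgen} as in the proof of Theorem~\ref{thmmain}; but this reuses the fact that the tolerances in~$\var V$ are congruences, which the paragraph preceding the corollary already exploits, so exhibiting~$h$ explicitly via~$p$ is the more self-contained illustration of Theorem~\ref{thmmain}.)
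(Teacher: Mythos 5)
Your construction is correct and the approach is essentially the paper's: verify $\mcond n$ by exhibiting an explicit $4n$-ary witness built from the \Malcev{} term~$p$, then apply Theorem~\ref{thmmain}. The paper's witness is just a bit simpler, namely $h(u_i,v_i\lcol i<n\lsep x_i,y_i\lcol i<n)=p\bigl(f(x_i,y_i\lcol i<n),\,f(x_i,u_i\lcol i<n),\,g(x_i,u_i\lcol i<n)\bigr)$, which avoids your auxiliary term $r_i$ altogether, but your $h$ checks out as well.
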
  

\begin{proof} Assume  
that $\var V$ is a congruence permutable variety. By a classical result of A.\,I.~Mal'\-cev~\cite{malcev}, $\var V$ has a \Malcev{} term~$p$, that is, a ternary term~$p$ such that  $p(x,x,y)\approx y\approx p(y,x,x)$ holds in~$\var V$. Assume that $f$ and~$g$ satisfies \eqref{Mn0} in~$\var V$. Let 
\begin{align*}
h(u_i,v_i\lcol i<n\lsep x_i,y_i\lcol i<n)
=p\bigl( f( x_i,y_i\lcol i<n),  f( x_i,u_i\lcol i<n),  g( x_i,u_i\lcol i<n)\bigr)
\text.\end{align*}
Obviously, this $h$ witnesses that $\mcond n$ holds in~$\var V$, and Theorem~\ref{thmmain} applies.
\end{proof}

The strength of the property~\tolprop{} is very well shown
by the following theorem, which refutes a possible
generalization.

\begin{theorem}\label{nperm} A congruence $n$-permutable
  variety has~\tolprop{} if and only if it is congruence permutable.
\end{theorem}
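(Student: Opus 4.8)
The plan is to prove the two implications separately, the easy one being immediate: if $\var V$ is congruence permutable, then it satisfies \tolprop{} by the corollary proved just above. So all the content lies in showing that a congruence $n$-permutable variety~$\var V$ that satisfies \tolprop{} is already congruence permutable.

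I would do this by downward induction on~$n$, the one nontrivial step being: \emph{if $n\ge 3$, then $\var V$ is congruence $(n-1)$-permutable}. Since \tolprop{} is a fixed property of~$\var V$ and congruence $2$-permutability is the same as congruence permutability, iterating this step down to $n=2$ finishes the proof. By the classical theorem of Hagemann and Mitschke, congruence $n$-permutability of~$\var V$ is witnessed by ternary terms $q_0,q_1,\dots,q_n$ with $q_0(x,y,z)\approx x$, $q_n(x,y,z)\approx z$, and $q_i(x,x,z)\approx q_{i+1}(x,z,z)$ for $0\le i\le n-1$; and to get congruence $(n-1)$-permutability it suffices to produce such a chain of length $n-1$. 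I would obtain the shorter chain by \emph{merging} $q_1$ and~$q_2$, that is, by constructing a ternary term~$q_1'$ with $q_1'(x,z,z)\approx x$ and $q_1'(x,x,z)\approx q_2(x,x,z)$. Indeed, $q_0,\,q_1',\,q_3,q_4,\dots,q_n$ is then a Hagemann--Mitschke chain of length $n-1$: the only new identities to verify, $q_0(x,x,z)\approx q_1'(x,z,z)$ and $q_1'(x,x,z)\approx q_3(x,z,z)$, reduce to $x\approx x$ and to $q_2(x,x,z)\approx q_3(x,z,z)$, the latter being part of the original chain.

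The term~$q_1'$ will come from the Maltsev-like condition $\mcond 2$, which holds in~$\var V$ by Theorem~\ref{thmmain}. Apply $\mcond 2$ to the $4$-ary terms $f(x_0,y_0,x_1,y_1):=q_1(x_0,y_0,x_1)$ and $g(x_0,y_0,x_1,y_1):=q_2(x_0,x_1,y_1)$: the hypothesis $f(x_0,x_0,x_1,x_1)\approx g(x_0,x_0,x_1,x_1)$ is exactly $q_1(x_0,x_0,x_1)\approx q_2(x_0,x_1,x_1)$, i.e.\ the Hagemann--Mitschke identity $q_1(x,x,z)\approx q_2(x,z,z)$. Thus $\mcond 2$ supplies an $8$-ary term~$h$ with
\begin{align*}
q_1(x_0,y_0,x_1) &\approx h(x_0,y_0,x_1,y_1\lsep x_0,y_0,x_1,y_1),\\
q_2(x_0,x_1,y_1) &\approx h(y_0,x_0,y_1,x_1\lsep x_0,y_0,x_1,y_1),
\end{align*}
and I would set $q_1'(x,y,z):=h(x,y,z,y\lsep x,y,y,z)$. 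Substituting $x_0,y_0,x_1,y_1:=x,z,z,z$ into the first identity gives $q_1'(x,z,z)\approx q_1(x,z,z)\approx x$, and substituting $x_0,y_0,x_1,y_1:=x,x,x,z$ into the second gives $q_1'(x,x,z)\approx q_2(x,x,z)$ — exactly the two identities demanded of~$q_1'$.

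I expect the only genuinely creative point to be pinning down the right pair $f,g$ and the right substitution $(x,y,z,y\lsep x,y,y,z)$ that converts the two defining identities of~$h$ into the two identities needed for~$q_1'$; after that, every verification is a routine substitution into identities already in hand. (Alternatively, one could merge all consecutive pairs of the Hagemann--Mitschke chain simultaneously, still using only $\mcond 2$, but the one-merge-at-a-time formulation keeps the bookkeeping transparent.)
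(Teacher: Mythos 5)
Your proof is correct and takes essentially the same route as the paper: both shorten the Hagemann--Mitschke chain by one step via a term obtained from $\mcond 2$ applied to $4$-ary terms built from two consecutive chain terms (your substitution $q_1'(x,y,z)=h(x,y,z,y\lsep x,y,y,z)$ is literally the paper's $p(a,b,c)=h(a,b,c,b,a,b,b,c)$), and then induct down to permutability. The remaining differences are cosmetic: a shifted indexing of the chain, and your verifying $q_1'(x,x,z)\approx q_2(x,x,z)$ by direct substitution rather than through the paper's intermediate identities.
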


\begin{proof} 
  The previous corollary shows one direction, so suppose
  that a variety $\var V$ is $n$-permutable and satisfies
  \tolprop{}. By the results of Hagemann and
  Mitschke~\cite{HaMi}, there exist ternary terms
  $p_1,\ldots, p_n$ such that the following are identities
  of~$\var V$:
\begin{align*}
&x \approx p_1(x,y,y)\,,\\
&p_i(x,x,y) \approx p_{i+1}(x,y,y) \text{ for $1\le i\le n-1$,}\\
&p_n(x,x,y) \approx y\,.
\end{align*}
We shall construct a term~$p$ such that $\var V$ satisfies
the identities $x \approx p(x,y,y)$ and $p(x,x,y) \approx
p_3(x,y,y)$. This replaces $p_1$ and $p_2$ above, implying
that the variety $\var V$ is actually $n-1$-permutable. Then
we shall be done by induction on~$n$.

Define
\[
f(x,u,v,y)=p_1(x,u,y)\text{ and } g(x,u,v,y)=p_2(x,v,y)\,.
\]
Then $f(x,x,y,y)\approx g(x,x,y,y)$ is an identity of~$\var
V$, since this reduces to the identity $p_1(x,x,y)\approx
p_2(x,y,y)$. Thus $\mcond 2$ implies the existence of  an $8$-ary term~$h$ satisfying the following identities:
\begin{align*}
&h(x,u,v,y,x,u,v,y) \approx f(x,u,v,y)\approx p_1(x,u,y)\,,\\
&h(u,x,y,v,x,u,v,y) \approx g(x,u,v,y)\approx p_2(x,v,y)\,.
\end{align*}
Since $p_1$ does not depend on $v$, we can substitute $v\to
y$ in the first identity, and similarly, $u\to x$ in the
second identity, so we get that
\begin{align}\label{w1}
h(x,u,y,y,x,u,y,y) &\approx p_1(x,u,y)\,,\\
h(x,x,y,v,x,x,v,y) &\approx p_2(x,v,y)\label{w2}
\end{align}
still hold in~$\var V$. Finally, let
\[
p(a,b,c)=h(a,b,c,b,a,b,b,c)\,.
\]
Then the substitution $u\to y$ in~(\ref{w1}) gives
\[
p(x,y,y)=h(x,y,y,y,x,y,y,y)\approx p_1(x,y,y)\approx x\,,
\]
and the substitution $v\to x$ in~(\ref{w2}) yields
\[
p(x,x,y)=h(x,x,y,x,x,x,x,y)\approx p_2(x,x,y)\approx p_3(x,y,y)\,,
\]
proving the theorem.
\end{proof}

Theorem~\ref{nperm} leads to further examples of varieties without \tolprop{}. For example, the variety of implication algebras is 3-permutable, see A.~Mitschke~\cite{mitschke}, while that of $n$-Boolean algebras, see E.\,T.~Scmidt~\cite{scmidtbook} and see also \cite{HaMi}, is $(n+1)$-permutable. Hence it follows from  Theorem~\ref{nperm} that these (non-idempotent) varieties do not satisfy \tolprop{} since they are not congruence permutable. 

In view of Theorem~\ref{nperm}, 
it would be interesting to see if 
there is a connection between \tolprop{} and other famous \Malcev{} conditions.


\end{document}